\documentclass[preprint,12pt,numbers,sort&compress]{elsarticle}
\usepackage{soul}
\usepackage{psfrag}
\usepackage{a4wide}
\usepackage{rotating}
\usepackage{color}
\usepackage{amssymb}
\usepackage{amsmath}
\usepackage{amsthm}
\usepackage{verbatim}
\usepackage{tikz}
\newcommand{\f}[1]{\boldsymbol{#1}}

\newcommand{\R}{\mathbb R}

\newcommand{\I}{\mathbb I}
\newcommand{\US}[3]{\mathcal{S}_{#3}^{#1,#2}}
\newcommand{\UN}[3]{N_{#3}^{#1,#2}}
\newcommand{\UM}[3]{M_{#3}^{#1,#2}}
\newcommand{\UR}[3]{R_{#3}^{#1,#2}}
\newcommand{\V}{\mathcal{V}}
\newcommand{\vol}{\text{vol}}
\newcommand{\grad}{\text{grad}}

\newcommand{\indexPatch}{\mathcal{I}_{\Omega}}
\newcommand{\indexFace}{\mathcal{I}_{\Gamma}}
\newcommand{\indexFaceBoundary}{\mathcal{I}_{\Gamma_B}}
\newcommand{\indexFaceInner}{\mathcal{I}_{\Gamma_I}}
\newcommand{\indexEdge}{\mathcal{I}_{\Sigma}}

\newcommand{\indexVertex}{\mathcal{I}_{\Xi}}

\newcommand{\patch}{\Omega}
\newcommand{\face}{\Gamma}
\newcommand{\edge}{\Sigma}
\newcommand{\vertex}{\Xi}
\newcommand{\map}{\f{F}}
\newcommand{\domain}{\Omega}
\newcommand{\surf}{\f{G}}

\newcommand{\phiPatch}[4]{\phi_{\Omega^{(#1)};#2,#3,#4}}
\newcommand{\phiFace}[4]{\phi_{\Gamma^{(#1)};#2,#3,#4}}
\newcommand{\phiOneEdgeAll}[1]{\psi_{\Sigma^{(#1)}}}
\newcommand{\phiOneVertexAll}[1]{\psi_{\Xi^{(#1)}}}
\newcommand{\phiFaceAll}[1]{\phi_{\Gamma^{(#1)}}}
\newcommand{\phiFaceAllRes}[1]{\phi_{\Gamma^{(#1)};B}}
\newcommand{\phiFaceAllResMod}[1]{\widetilde{\phi}_{\Gamma^{(#1)};B}}

\newcommand{\phiEdgeAllSingle}[1]{\phi_{\Sigma;#1}}
\newcommand{\phiEdgeAllSingleMod}[1]{\widetilde{\phi}_{\Sigma;#1}}

\newcommand{\fPatch}[5]{f^{(#5)}_{\Omega^{(#1)};#2,#3,#4}}
\newcommand{\fFace}[5]{f^{(#5)}_{\Gamma^{(#1)};#2,#3,#4}}

\newcommand{\fstandard}[1]{f^{(#1)}}

\newcommand{\ftrace}{f_0}
\newcommand{\fder}{f_1}

\newcommand{\coeff}{a}
\newcommand{\coeffAllFace}{\f{a}_{\Gamma}}
\newcommand{\coeffAllEdge}{\f{a}_{\Sigma}}
\newcommand{\coeffAllEdgeMod}{\widetilde{\f{a}}_{\Sigma}}
\newcommand{\coeffPatchVertex}[5]{a^{(#5)}_{\Xi^{(#1)};#2,#3,#4}}

\newcommand{\coeffPatchEdgeSt}[5]{a^{(#5)}_{\Sigma^{(#1)};#2,#3,#4}}
\newcommand{\coeffFace}[4]{a_{\Gamma^{(#1)};#2,#3,#4}}

\newcommand{\TMatrixFace}{T_{\Gamma}}
\newcommand{\TMatrixEdge}{T_{\Sigma}}
\newcommand{\TMatrixEdgeMod}{\widetilde{T}_{\Sigma}}

\newcommand{\SpacePatch}[1]{\V^1_{\Omega^{(#1)}}}
\newcommand{\SpaceFace}[1]{\V^{1}_{\Gamma^{(#1)}}}

\newcommand{\SpaceEdgeAll}{\V^{1}_{\Sigma}}
\newcommand{\SpaceFaceAll}{\V^{1}_{\Gamma}}
\newcommand{\SpacePatchTwo}[1]{\widetilde{\V}^1_{\Omega^{(#1)}}}
\newcommand{\SpaceFaceTwo}[1]{\widetilde{\V}^{1}_{\Gamma^{(#1)}}}
\newcommand{\SpacePatchMod}[1]{\widetilde{\V}^1_{\Omega^{(#1)}}}
\newcommand{\SpaceFaceMod}[1]{\widetilde{\V}^{1}_{\Gamma^{(#1)}}}
\newcommand{\SpaceEdgeAllMod}{\widetilde{\V}^{1}_{\Sigma}}

\newcommand{\WhichPatchesE}[1]{E_{#1}}
\newcommand{\WhichPatchesV}[1]{V_{#1}}
\newcommand{\WhichEdgesV}[1]{D_{#1}}
\newcommand{\LeftFace}[2]{L_{#1,#2}}
\newcommand{\RightFace}[2]{R_{#1,#2}}

\newcommand{\numPatches}{\nu}

\newcommand{\classVolume}{\mathcal{A}}

\DeclareMathOperator{\Span}{span}

\newtheorem{thm}{Theorem}

\newtheorem{ass}{Assumption}

\theoremstyle{definition}

\newtheorem{rem}{Remark}
\advance\textheight by 0.4cm
\advance\topmargin by -0.2cm

\definecolor{gold}{rgb}{1,0.7,0}
\definecolor{dred}{rgb}{0.92,0,0}
\definecolor{dgreen}{rgb}{0,0.6,0}

\definecolor{myred}{rgb}{1,0.2,0.2}
\definecolor{mygreen}{rgb}{0,0.5,0}
\definecolor{myblue}{rgb}{0.2,0.2,1}
\definecolor{mypurple}{rgb}{1,0.3,1}
\definecolor{myviolet}{rgb}{0.3,0,0.7}

\bibliographystyle{plain} 

\begin{document}

\begin{frontmatter}

\title{$C^1$ isogeometric spline space for trilinearly parameterized multi-patch volumes}

\author[vil]{Mario Kapl\corref{cor}}
\ead{m.kapl@fh-kaernten.at}

\author[slo1]{Vito Vitrih}
\ead{vito.vitrih@upr.si}
 
\address[vil]{Department of Engineering $\&$ IT, Carinthia University of Applied Sciences, Villach, Austria}

\address[slo1]{IAM and FAMNIT, University of Primorska, Koper, Slovenia}

\cortext[cor]{Corresponding author}

\begin{abstract}
We study the space of $C^1$ isogeometric spline functions defined on trilinearly parameterized multi-patch volumes. Amongst others, we present a general framework for the design of the $C^1$ isogeometric spline space and of an associated basis, which is based on the 
two-patch construction~\cite{BiKa19}, and which works uniformly for any possible multi-patch configuration. The presented method is demonstrated in more detail on the basis of a particular subclass of trilinear multi-patch volumes, namely for the class of trilinearly parameterized multi-patch volumes with exactly one inner edge. For this specific subclass of trivariate multi-patch parameterizations, we further numerically compute the dimension of the resulting $C^1$ isogeometric spline space and use the constructed $C^1$ isogeometric basis functions to numerically explore the approximation properties of the $C^1$ spline space by performing $L^2$ approximation.
\end{abstract}

\begin{keyword}
isogeometric analysis \sep $C^1$-continuity \sep geometric continuity \sep multi-patch volume \sep isogeometric basis functions
\MSC[2010] 65N35 \sep 65D17 \sep 68U07
\end{keyword}

\end{frontmatter}

\section{Introduction}

In the context of isogeometric analysis (IgA)~\cite{ANU:9260759, CottrellBook, HuCoBa04}, the construction of globally $C^1$~spline spaces over 
multi-patch geometries is a topic of great interest, since it allows the solving of fourth order partial differential equations (PDEs) over complex 
geometries just via their weak form and a standard Galerkin discretization, see e.g. \cite{BaDe15,CoSaTa16,KaBuBeJu16,NgKaPe15,TaDe14} for the biharmonic 
equation, \cite{ABBLRS-stream,benson2011large,kiendl-bazilevs-hsu-wuechner-bletzinger-10,kiendl-bletzinger-linhard-09,KiHsWuRe15} for the Kirchhoff-Love 
shell problem, \cite{gomez2008isogeometric,GoCaHu09,LiDeEvBoHu13} for the Cahn-Hilliard equation, and \cite{gradientElast2011,MaReBeJu18,KhakaloNiiranenC1} for 
problems of strain gradient elasticity. While in case of bivariate multi-patch geometries, i.e. for planar multi-patch domains and multi-patch surfaces, 
the design of $C^1$ spline spaces has been intensively studied in the last years, cf. the recent survey paper~\cite{KaSaTa19b}, in case of trivariate multi-patch 
parameterizations, that is for multi-patch volumes, the construction of such smooth spline spaces has been dealt with just in a small number of publications, 
see e.g. \cite{BiJuMa17,BiKa19,BiJuMa20,ChAnRa19,NgKaPe15}.

The work in~\cite{NgKaPe15} is based on a sweeping approach, which generates from a $C^1$ bivariate multi-patch spline space a particular $C^1$ spline 
space over a multi-patch volume. The constructed isogeometric functions are triquadratic except in the vicinity of an extraordinary vertex or edge, i.e 
a vertex or edge with a valency different to four, where the degree is raised to be three or four depending on the valency of the vertex or edge. 
In~\cite{ChAnRa19}, for a given general muti-patch volume with possibly extraordinary vertices or edges, partial degree elevation across the common faces is 
performed to construct $C^1$ spline spaces with good approximation properties. The proposed method relies on solving a large homogeneous system of linear 
equations, is applicable to any spline degree~$p \geq 2$, and results in isogeometric functions with an increased degree in neighborhood of the 
common faces of the multi-patch volume.

The publications~\cite{BiJuMa17,BiKa19,BiJuMa20} explore the entire $C^1$ isogeometric spline space over trilinearly parameterized two-patch volumes. While 
in \cite{BiJuMa17} the dimension and a basis of the $C^1$ spline space is numerically obtained for a spline degree~$p=3$ and 
$p=4$, in \cite{BiKa19}, a full theoretical framework for any spline degree~$p \geq 3$ is developed to compute the dimension and to generate a basis of the 
$C^1$~spline space. In contrast to~\cite{ChAnRa19,NgKaPe15}, the constructed $C^1$ isogeometric basis functions possess for both cases~\cite{BiJuMa17,BiKa19} 
the same degree on the whole multi-patch volume. Furthermore, the $C^1$ basis construction~\cite{BiJuMa17} is used in~\cite{BiJuMa20} for the case of
$p=3$ and $p=4$ to perform $L^2$ projection on trilinearly parameterized two-patch volumes to numerically investigate the approximation properties of the 
corresponding $C^1$ isogeometric spline space. The numerical results indicate an optimal approximation power for $p=4$ and a slightly reduced one 
for $p=3$ which is mainly effected by the reduced convergence caused in the vicinity of the common face of the two-patch domain. 

Another approach for the design of smooth spline spaces over multi-patch volumes, which is related to the problem of constructing $C^1$ isogeometric 
multi-patch spline spaces but does not completely solve the issue, is the technique~\cite{WeiBlendedBC}. There, a tricubic spline space over a given 
multi-patch volume is generated, which is $C^2$ inside the single patches, $C^1$ across the common faces but just $C^0$ at extraordinary edges and vertices.

This paper extends now the work in~\cite{BiJuMa17,BiKa19} for the construction of $C^1$ isogeometric spline spaces over trilinearly parameterized two-patch 
volumes to the case of trilinear multi-patch volumes with arbitrary many patches and with possibly extraordinary edges and vertices. More precisely, we 
analyze the space of $C^1$ isogeometric spline functions defined on these trilinear multi-patch volumes and describe a general framework to generate
the $C^1$ isogeometric spline space and a basis of the space. The proposed technique relies on the constructed $C^1$~basis functions 
for the two-patch case in~\cite{BiKa19} and can be applied in a uniform way to any possible trilinear multi-patch volume and to any spline degree~$p \geq 3$. 
In addition, a specific subclass of trilinearly parameterized multi-patch volumes, namely the class of trilinear multi-patch volumes with exactly one 
inner edge, is considered in more detail. This subclass of trivariate multi-patch parameterizations is of particular interest since it comprises 
multi-patch volumes with still a small number of patches but which already allow to model quite complex domains. For this specific subclass of 
trilinear multi-patch volumes, the dimension of the resulting $C^1$ isogeometric spline space is numerically computed, and $L^2$ approximation is performed to 
numerically study the approximation properties of the $C^1$ spline space.

The remainder of the paper is organized as follows. Section~\ref{sec:volumes-spaces} presents the class of trilinearly parameterized 
multi-patch volumes and introduces further the concept of $C^1$~isogeometric spline spaces over this class of multi-patch volumes. 
In Section~\ref{sec:C1-continuity}, the $C^1$~continuity condition across an interface of a given multi-patch volume is discussed, which has been already 
studied in~\cite{BiKa19} for the case of a trilinear two-patch volume, and which can be used to represent a $C^1$ isogeometric spline 
function in an explicit form in the vicinity of the interface. This representation is then employed in Section~\ref{sec:design_study_C1spaces} to 
develop a general framework for the design of the $C^1$ isogeometric spline space over the given trilinear multi-patch volume and of an associated
basis of the space. For a particular subclass of trilinear multi-patch volumes, namely for the class of trilinearly parameterized multi-patch volumes 
with exactly one inner edge, the basis construction is discussed in more detail in Section~\ref{sec:specific-class}, and is used to numerically compute 
the dimension of the associated $C^1$ isogeometric spline space and to numerically investigate the approximation power of the space. Finally, we conclude 
the paper in Section~\ref{sec:conclusion}.

\section{Multi-patch volumes \& $C^1$~isogeometric spline spaces} \label{sec:volumes-spaces}

We will first introduce the particular setting of the volumetric multi-patch domains, which will be used throughout the paper, and then shortly present 
the concept of $C^1$~isogeometric spline spaces over these domains.

\subsection{Trilinear multi-patch volumes}

Let $\domain \subset \R^3$ be an open domain, whose closure $\overline{\domain}$ is the disjoint union of open components given by 
hexahedral patches~$\patch^{(i)}$, $i \in \indexPatch$, quadrilateral faces~$\face^{(i)}$, $i \in \indexFace$, edges~$\edge^{(i)}$, 
$i \in \indexEdge$, and vertices~$\vertex^{(i)}$, $i \in \indexVertex$, i.e.
\begin{equation*} 
\displaystyle
\overline{\domain} = \bigcup_{i \in \indexPatch} \patch^{(i)}  \; \dot{\cup}  \bigcup_{i \in \indexFace} \face^{(i)} \; 
\dot{\cup} \bigcup_{i \in \indexEdge} \edge^{(i)} \dot{\cup} \bigcup_{i \in \indexVertex} \vertex^{(i)},
\end{equation*}
where the symbol $\dot{\cup}$ is used to denote the disjoint union of two sets.
Each vertex~$\vertex^{(i)}$, $i \in \indexVertex$, is a point in the space, i.e. $\vertex^{(i)} \in \R^3$, and each edge~$\edge^{(i)}$, 
$i \in \indexEdge$, face~$\face^{(i)}$, $i \in \indexFace$, and patch~$\patch^{(i)}$, $i\in \indexPatch$, is defined by $2$, $4$ or $8$ of 
these vertices via 
\begin{equation} \label{eq:par_edge}
 \edge^{(i)}= \{ (1-\xi)\vertex^{(i_0)} + \xi \vertex^{(i_1)} \; |\;  \xi \in (0,1) \},
\end{equation}
\begin{equation} \label{par_face}
\face^{(i)} = \{ (1-\xi_1)(1-\xi_2)\vertex^{(i_0)} + \xi_1(1-\xi_2) \vertex^{(i_1)} + (1-\xi_1)\xi_2\vertex^{(i_2)} + \xi_1\xi_2 \vertex^{(i_3)} \; | \; 
(\xi_1,\xi_2) \in (0,1)^2 \},
\end{equation}
or
\begin{eqnarray}
 \patch^{(i)} & = & \{ (1-\xi_1)(1-\xi_2)(1-\xi_3)\vertex^{(i_0)} + \xi_1(1-\xi_2)(1-\xi_3) \vertex^{(i_1)} + \nonumber \\
& & (1-\xi_1)\xi_2 (1-\xi_3)\vertex^{(i_2)} +
 \xi_1\xi_2 (1-\xi_3) \vertex^{(i_3)} +  (1-\xi_1)(1-\xi_2)\xi_3\vertex^{(i_4)} + \label{eq:par_patch} \\  
 & &\xi_1(1-\xi_2)\xi_3 \vertex^{(i_5)} + (1-\xi_1)\xi_2 \xi_3 \vertex^{(i_6)} 
+ \xi_1\xi_2 \xi_3 \vertex^{(i_7)} \; | \;  
 (\xi_1,\xi_2,\xi_3) \in (0,1)^{3} \}, \nonumber
\end{eqnarray}
respectively. The faces~$\face^{(i)}$, $i \in \indexFace$, will be distinguished throughout the paper between boundary faces~$\face^{(i)}$, 
$i \in \indexFaceBoundary$, i.e. $\face^{(i)} \subset \partial \domain$, and inner faces~$\face^{(i)}$, $i \in \indexFaceInner$, i.e. 
$\face^{(i)} \subset \overline{\domain} \setminus \partial \domain$, and we will further have $\indexFace = \indexFaceBoundary \dot{\cup} \indexFaceInner$.
We assume that the domain $\overline{\Omega}$ does not possess any hanging vertex or edge. Moreover, we assume that all parameterizations 
in~\eqref{eq:par_edge}--\eqref{eq:par_patch} are non-singular, and denote by $\map^{(i)}$ the 
extended trilinear and non-singular parameterization in~\eqref{eq:par_patch}, called \textit{geometry mapping}, for the closure of 
$\patch^{(i)}$, i.e., $\map^{(i)}: [0,1]^3 \rightarrow \overline{\patch^{(i)}}$. Clearly, each vertex~$\vertex^{(i)}$, $i \in \indexVertex$, each 
edge~$\edge^{(i)}$, $i \in \indexEdge$, and each face~$\face^{(i)}$, $i \in \indexFace$, can then be also interpreted as the image of a boundary 
point, open boundary edge, or open boundary face of the unit cube~$[0,1]^3$ for at least one geometry mapping~$\map^{(i)}$, $i \in \indexPatch$.

The domain~$\overline{\domain}$ is also referred to as \textit{multi-patch volume}, and the collection of all geometry mappings~$\map^{(i)}$, 
$i \in \indexPatch$, is called the \textit{geometry} of the multi-patch volume~$\overline{\domain}$. An example of a three-patch 
volume~$\overline{\domain}$ with their individual patches~$\patch^{(i)}$, $i \in \{1,2,3\}$, and with their corresponding geometry 
mappings~$\map^{(i)}$, as well as the decomposition of the three-patch volume into the single patches~$\patch^{(i)}$, $i \in \indexPatch$, 
faces~$\face^{(i)}$, $i \in \indexFace$, edges $\edge^{(i)}$, $i \in \indexEdge$, and vertices~$\vertex^{(i)}$, $i \in \indexVertex$, is shown in 
Fig.~\ref{fig:geometrymapping_volumes}. 
\begin{figure}[htp]
\begin{center}
\begin{tabular}{cc}
\includegraphics[width=9.4cm]{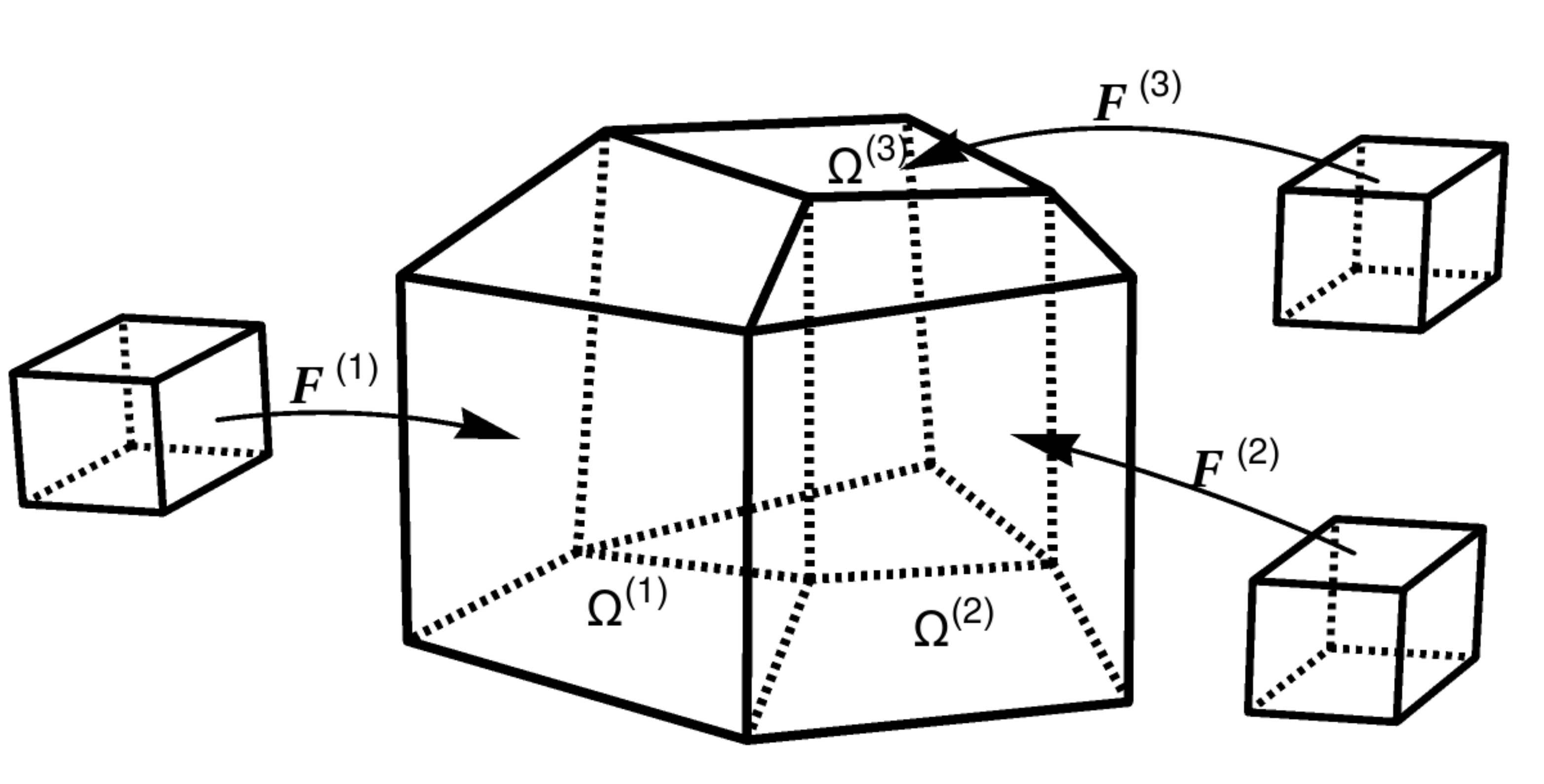}&
\includegraphics[width=5.9cm]{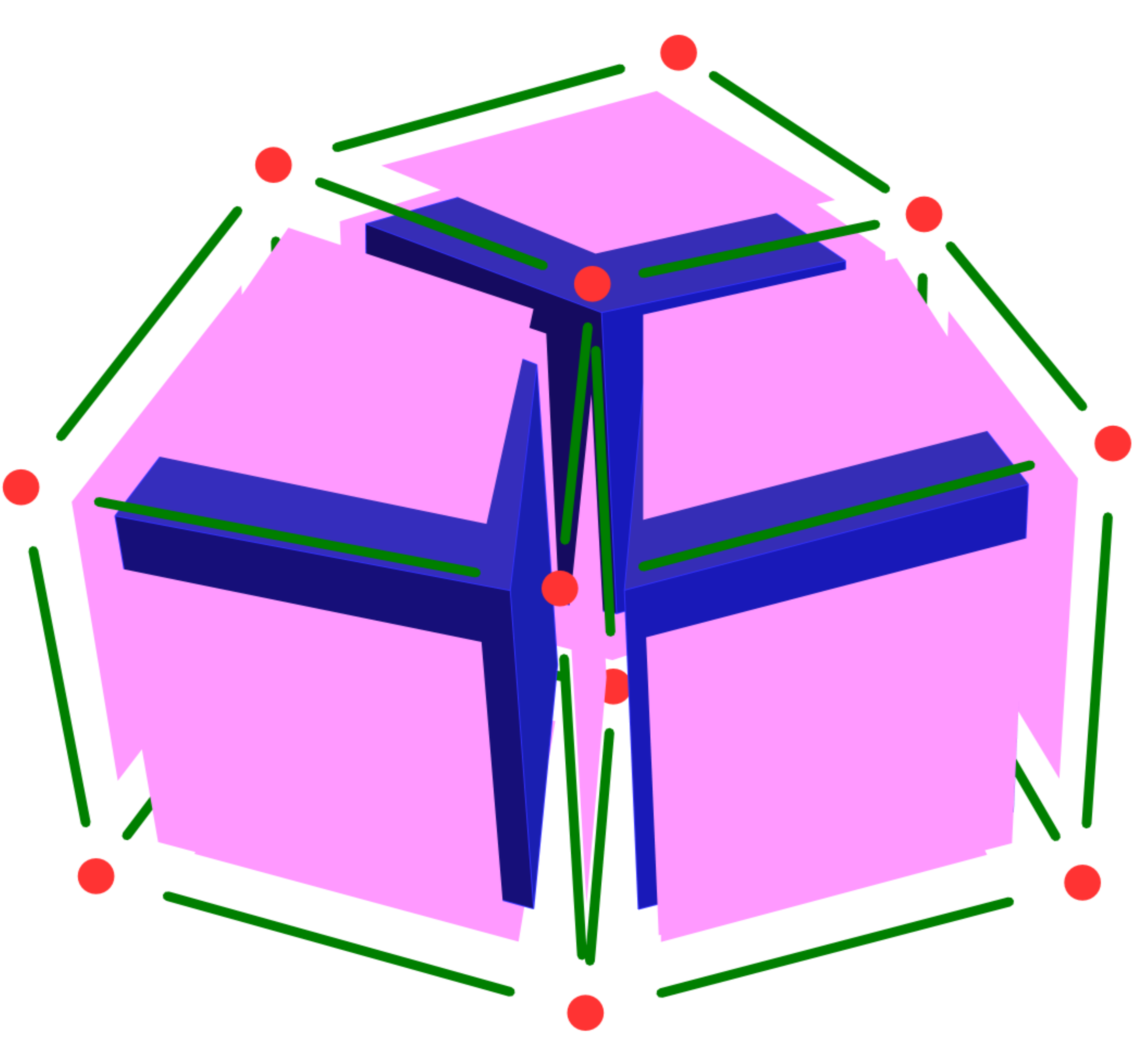}
\end{tabular}
\end{center}
\caption{Left: A trilinear multi-patch volume~$\overline{\domain}$ with three patches~$\patch^{(i)}$, $i \in \{1,2,3 \}$, and with their associated geometry 
mappings~$\map^{(i)}$. Right: The decomposition of the multi-patch volume~$\overline{\domain}$ into the single patches~$\patch^{(i)}$ (blue), 
faces~$\face^{(i)}$ (pink), edges~$\edge^{(i)}$ (green) and vertices $\vertex^{(i)}$ (red).} 
\label{fig:geometrymapping_volumes} 
\end{figure}

\subsection{The concept of $C^1$~isogeometric spline spaces for multi-patch volumes} \label{subsec:concept_C1spaces}

Let $p \geq 3$, $1 \leq r \leq p-2$ and $k \geq 0$. We denote by $\US{p}{r}{h}$ the univariate spline space of degree~$p$ and 
regularity~$r$ on a uniform mesh of size $h=\frac{1}{k+1}$ over the unit interval~$[0,1]$, where $k+1$ is the number of spline elements. Moreover, 
let $\US{\f{p}}{\f{r}}{h}$ with $\f{p}=(p,p)$ and $\f{r}=(r,r)$ or $\f{p}=(p,p,p)$ and $\f{r}=(r,r,r)$ be the bivariate or trivariate tensor-product 
spline space~$\US{p}{r}{h} \times \US{p}{r}{h}$ or $\US{p}{r}{h} \times \US{p}{r}{h} \times \US{p}{r}{h}$ over the unit square~$[0,1]^2$ or the 
unit cube~$[0,1]^3$, respectively. We denote by $\UN{p}{r}{j}$, $j \in \I =\{0,1,\ldots, n-1 \}$, the  B-splines of the univariate spline 
space~$\US{p}{r}{h}$, where $n$ is the dimension of the spline space~$\US{p}{r}{h}$, i.e. $n= \dim \US{p}{r}{h} = p + 1 + k(p-r)$, 
and we denote by $\UN{\f{p}}{\f{r}}{j_1,j_2} =\UN{p}{r}{j_1} \UN{p}{r}{j_2}$, $j_1,j_2 \in \I$, and
$\UN{\f{p}}{\f{r}}{j_1,j_2,j_3} =\UN{p}{r}{j_1} \UN{p}{r}{j_2}\UN{p}{r}{j_3}$, $j_1,j_2,j_3 \in \I$, the tensor-product B-splines of the 
bivariate and trivariate spline space~$\US{\f{p}}{\f{r}}{h}$, respectively. 

Note that the trilinear parameterization~\eqref{eq:par_patch} of the geometry mappings~$\map^{(i)}$, $i \in \indexPatch$, trivially implies that 
$\map^{(i)} \in (\US{\f{p}}{\f{r}}{h})^3$. Then, the space of isogeometric spline functions on the multi-patch volume~$\overline{\domain}$ 
(with respect to the spline space~$\US{\f{p}}{\f{r}}{h}$) is defined as
\[
 \V = \left\{ \phi \in L^2(\overline{\domain}) \; | \; \phi |_{\overline{\patch^{(i)}}} \in \US{\f{p}}{\f{r}}{h} \circ (\map^{(i)})^{-1}, \; 
i \in \indexPatch   \right\}.
\]
Therefore, an isogeometric function~$\phi \in \V$ possesses for each patch~$\patch^{(i)}$, $i \in \indexPatch$, a spline 
function~$\fstandard{i} = \phi \circ \map^{(i)}$ of the form
\begin{equation*} \label{eq:standard_representation}
\fstandard{i}(\xi_1,\xi_2,\xi_3) = \sum_{j_1=0}^{n-1} \sum_{j_2=0}^{n-1} \sum_{j_3=0}^{n-1} \coeff_{j_1,j_2,j_3}^{(i)} \UN{\f{p}}{\f{r}}{j_1,j_2,j_3} 
(\xi_1,\xi_2,\xi_3), \quad (\xi_1,\xi_2,\xi_3) \in [0,1]^3, 
\end{equation*}
with $\coeff_{j_1,j_2,j_3}^{(i)} \in \R$. We are interested in the $C^1$ subspace of $\V$, which is given by 
\[
 \V^1 = \V \cap C^1(\overline{\domain}).
\]
The space~$\V^1$ is fully characterized by the observation that \textit{an isogeometric function}~$\phi \in \V$ \textit{belongs to the space} $\V^1$ \textit{if and only if for any two neighboring patches}~$\patch^{(i_0)}$ \textit{and} $\patch^{(i_1)}$, $i_0,i_1 \in \indexPatch$, \textit{with the common inner face}~$\overline{\face^{(i)}} = \overline{\patch^{(i_0)}} \cap \overline{\patch^{(i_1)}}$, $i \in \indexFaceInner$, \textit{the associated graph patches} 
$\left[\begin{array}{cc}
  \map^{(i_0)} &
  \phi \circ \map^{(i_0)}
 \end{array} \right]^T$
and 
$\left[\begin{array}{cc}
  \map^{(i_1)} & 
  \phi \circ \map^{(i_1)}
 \end{array} \right]^T$
\textit{are} $G^1$\textit{-continuous}, cf.~\cite{BiJuMa17,Pe15}. The $C^1$ continuity condition of an isogeometric function~$\phi \in \V$ 
has been studied in detail for the two-patch case in~\cite{BiKa19}, and will be recalled in the next section.

\section{$C^1$ continuity condition} \label{sec:C1-continuity}

We present the $C^1$ smoothness condition of an isogeometric function~$\phi \in \V$, which has been already considered for the case of a two-patch 
volume in~\cite{BiKa19}, and which can be also used in this work to study the $C^1$ isogeometric spline space~$\V^1$ and to generate a basis for 
the space~$\V^1$ over any possible configuration of a trilinear multi-patch volume~$\overline{\domain}$. Firstly, we will introduce some required 
assumptions, definitions and concepts.

\subsection{Assumptions, definitions and concepts}

Let $\face^{(i)}$, $ i \in \indexFaceInner$, be an inner face with two neighboring patches~$\patch^{(i_0)}$ 
and $\patch^{(i_1)}$, $i_0,i_1 \in \indexPatch$, i.e. $\overline{\face^{(i)}} = \overline{\patch^{(i_0)}} \cap \overline{\patch^{(i_1)}}$. Then, we will 
assume throughout the paper and without loss of generality that the two associated geometry mappings~$\map^{(i_0)}$ and $\map^{(i_1)}$ are 
(re)parameterized in such a way that the common inner face~$\overline{\face^{(i)}}$ is given by
\begin{equation} \label{eq:standard_form_two_patches} 
 \map^{(i_0)}(0,t_1,t_2) = \map^{(i_1)}(t_1,0,t_2),  \quad
 (t_1,t_2) \in [0,1]^2, 
\end{equation}
and that the $12$ different vertices of the two-patch volume~$\overline{\patch^{(i_0)}} \cup \overline{\patch^{(i_1)}}$ are labeled as 
$\vertex^{(i_0)}, \vertex^{(i_1)},\ldots,$ $\vertex^{(i_{11})}$, cf. Figure~\ref{fig:standard_two-patch_case}. We also say that the two geometry mappings 
$\map^{(i_0)}$ and $\map^{(i_1)}$ are given in \textit{standard form} with respect to their common inner face~$\face^{(i)}$ when they 
fulfill~\eqref{eq:standard_form_two_patches}.

\begin{figure}
\begin{center}
\resizebox{0.75\textwidth}{!}{
 \begin{tikzpicture}
  \coordinate(K) at (-0.5,0); \coordinate(L) at (1.4,0.5); \coordinate(M) at (1.3,4); \coordinate(N) at (0.2,2.7);
  \fill[gray!10] (L) -- (M) -- (M) -- (N) -- (N) -- (K) -- (K) -- (L);
  \draw[thick,dashed] (L) -- (K); \draw[thick,dashed] (L) -- (M);  \draw[thick] (M) -- (N); \draw[thick] (N) -- (K);

  \coordinate(A) at (3,-0.9); \coordinate(B) at (4,0.4); \coordinate(C) at (4.3,3.8); \coordinate(D) at (2.9,3);
  \draw (A) -- (K); \draw (A) -- (B);  \draw[dashed] (B) -- (L); \draw (B) -- (C); \draw (M) -- (C); \draw (D) -- (C); \draw (D) -- (N); 
  \draw (D) -- (A);

  \coordinate(E) at (-3.5,-0.2); \coordinate(F) at (-1.6,0.8); \coordinate(G) at (-2,3.8); \coordinate(H) at (-3.3,2.9);
  \draw (E) -- (K); \draw (E) -- (F);  \draw[dashed] (F) -- (L); \draw[dashed] (F) -- (G); \draw (M) -- (G); \draw (H) -- (G); \draw (H) -- (N); 
  \draw (H) -- (E);

  \fill (K) circle (1pt); \fill (L) circle (1pt); \fill (M) circle (1pt); \fill (N) circle (1pt);
  \fill (A) circle (1pt); \fill (B) circle (1pt); \fill (C) circle (1pt); \fill (D) circle (1pt); \fill (E) circle (1pt); \fill (F) circle (1pt);
  \fill (G) circle (1pt); \fill (H) circle (1pt);

  \draw[->] (-0.05,0.05) -- (1,0.33);
  \draw[->] (0,-0.05) -- (1,-0.3);
  \draw[->] (-0.37,0.15) -- (-0.1,1.2);
  \draw[->] (-0.6,0.06) -- (-1.8,-0.015);
  \draw[->] (-0.57,0.15) -- (-0.3,1.2);
  \draw[->] (-0.25,0.15) -- (0.8,0.43);

  \node at (1.1,0.2) {\scriptsize $\xi_2$};
  \node at (0.1,1) {\scriptsize $\xi_3$};
  \node at (-0.6,1) {\scriptsize $\xi_3$};
  \node at (-1.6,0.2) {\scriptsize $\xi_2$};
  \node at (0.9,-0.1) {\scriptsize $\xi_1$};
   \node at (0.4,0.47) {\scriptsize $\xi_1$};
  \node at (0.75,1.8) {\scriptsize $\face^{(i)}$};
  \node[left] at (E) {\scriptsize $\vertex^{(i_8)}$}; %
  \node[left] at (F) {\scriptsize $\vertex^{(i_9)}$}; %
  \node[left] at (G) {\scriptsize $\vertex^{(i_{11})}$}; %
  \node[left] at (H) {\scriptsize $\vertex^{(i_{10})}$}; %
  \node[below] at (K) {\scriptsize $\vertex^{(i_0)}$}; %
  \node[above] at (L) {\; \; \; \scriptsize $\vertex^{(i_2)}$}; %
  \node[above] at (M) {\scriptsize $\vertex^{(i_6)}$}; %
  \node[above] at (N) {\scriptsize $\vertex^{(i_4)}$ \; }; %
  \node[right] at (A) {\scriptsize $\vertex^{(i_1)}$}; %
  \node[right] at (B) {\scriptsize $\vertex^{(i_3)}$}; %
  \node[right] at (C) {\scriptsize $\vertex^{(i_{7})}$}; %
  \node[right] at (D) {\scriptsize $\vertex^{(i_{5})}$}; %
  \node at (-1,1.8) {\scriptsize $\patch^{(i_1)}$};
  \node at (2.2,1.8) {\scriptsize$\patch^{(i_0)}$};

  \end{tikzpicture}
 }
 \end{center}
 \caption{For any inner face~$\face^{(i)}$, $i \in \indexFaceInner$, between the two neighboring patches~$\patch^{(i_0)}$ and $\patch^{(i_1)}$, $i_0,i_1 
 \in \indexPatch$, we assume that the common inner face~$\face^{(i)}$ is parameterized via the geometry mappings~$\map^{(i_0)}$ and 
 $\map^{(i_1)}$ as shown, and that the $12$ different vertices are labeled as visualized.}
 \label{fig:standard_two-patch_case}
\end{figure}

We define bivariate functions~$\alpha^{(i,i_0)}, \alpha^{(i,i_1)},\beta^{(i)}, \gamma^{(i)} :[0,1]^2 \rightarrow \R$ by
\begin{equation} \label{eq:gldata}
\begin{split}
\alpha^{(i,i_0)}(t_1,t_2) = \lambda \det\left(\partial_1 \map^{(i_0)}(0,t_1,t_2), \partial_2 \map^{(i_0)}(0,t_1,t_2), 
\partial_3 \map^{(i_0)}(0,t_1,t_2)  \right),  
\\
\alpha^{(i,i_1)}(t_1,t_2) = \lambda \det\left(\partial_1 \map^{(i_1)}(t_1,0,t_2), \partial_2 \map^{(i_1)}(t_1,0,t_2), 
\partial_3 \map^{(i_1)}(t_1,0,t_2)  \right), 
\\
\beta^{(i)}(t_1,t_2) = \lambda \det\left(\partial_2 \map^{(i_1)}(t_1,0,t_2), \partial_3 \map^{(i_1)}(t_1,0,t_2), 
\partial_1 \map^{(i_0)}(0,t_1,t_2)  \right),  
\\
\gamma^{(i)}(t_1,t_2) = \lambda \det\left(\partial_1 \map^{(i_1)}(t_1,0,t_2), \partial_2 \map^{(i_1)}(t_1,0,t_2), 
\partial_1 \map^{(i_0)}(0,t_1,t_2)  \right),  
\end{split}
\end{equation}
where $\lambda \in  (0,\infty) 
$ is given by minimizing the term
\[
||\alpha^{(i_0)} - 1  ||^2_{L_2} + || \alpha^{(i_1)} - 1 ||^2_{L_2},
\]
and which satisfy 
\begin{gather}
 \alpha^{(i,i_0)}(t_1,t_2) \partial_2 \map^{(i_1)}(t_1,0,t_2) + \alpha^{(i,i_1)}(t_1,t_2) \partial_1 \map^{(i_0)}(0,t_1,t_2) - \nonumber \\[-0.3cm] 
 \label{eq:cond_mapping} \\[-0.3cm]
 \beta^{(i)}(t_1,t_2) \partial_1 \map^{(i_1)}(t_1,0,t_2) - \gamma^{(i)}(t_1,t_2) \partial_3 \map^{(i_1)}(t_1,0,t_2)
 =  \f{0} \nonumber 
\end{gather}
for $(t_1,t_2) \in [0,1]^2$, cf.~\cite{BiKa19}. Since the geometry mappings~$\map^{(i_0)}$ and $\map^{(i_1)}$ are trilinear, non-singular 
parameterizations, the functions~$\alpha^{(i,i_0)}$, $\alpha^{(i,i_1)}$, $\beta^{(i)}$ and $\gamma^{(i)}$ are bivariate polynomials of 
bidegree~$(2,2)$, $(2,2)$, $(3,2)$ and $(2,3)$, respectively, and the functions~$\alpha^{(i,i_0)}$ and $\alpha^{(i,i_1)}$ further fulfill
\[
 \alpha^{(i,i_0)}(t_1,t_2) > 0 \mbox{ and } \alpha^{(i,i_1)}(t_1,t_2) >0,  \mbox{ } (t_1,t_2) \in [0,1]^2.
\]
It was also shown in~\cite{BiKa19}, that in case of a non-planar face~$\face^{(i)}$, there exist bilinear polynomial 
functions $\beta^{(i,i_0)}, \beta^{(i,i_1)}, \gamma^{(i,i_0)}, \gamma^{(i,i_1)}:[0,1]^2 \rightarrow \R$ given by 
\begin{equation*} \label{eq:split_beta}
\begin{split}
\beta^{(i,i_0)}(t_1,t_2)=\frac{\det (\vertex^{(i_6)}-\vertex^{(i_2)},\vertex^{(i_0)}-\vertex^{(i_4)},\partial_1 \map^{(i_0)}(0,t_1,t_2))}{\vol^{(i)}},\\
\beta^{(i,i_1)}(t_1,t_2)=\frac{\det (\vertex^{(i_6)}-\vertex^{(i_2)},\vertex^{(i_0)}-\vertex^{(i_4)},\partial_2 \map^{(i_1)}(t_1,0,t_2))}{\vol^{(i)}},\\
\end{split}
\end{equation*}
and
\begin{equation*} \label{eq:split_gamma}
\begin{split}
\gamma^{(i,i_0)}(t_1,t_2)=\frac{\det (\vertex^{(i_6)}-\vertex^{(i_4)},\vertex^{(i_2)}-\vertex^{(i_0)},\partial_1 \map^{(i_0)}(0,t_1,t_2))}{\vol^{(i)}},\\
\gamma^{(i,i_1)}(t_1,t_2)=\frac{\det (\vertex^{(i_6)}-\vertex^{(i_4)},\vertex^{(i_2)}-\vertex^{(i_0)},\partial_2 \map^{(i_1)}(t_1,0,t_2))}{\vol^{(i)}},\\
\end{split}
\end{equation*}
respectively, which satisfy
\begin{equation} \label{eq:beta}
  \beta^{(i)}(t_1,t_2) = \beta^{(i,i_0)}(t_1,t_2)\alpha^{(i,i_1)}(t_1,t_2) +  \beta^{(i,i_1)}(t_1,t_2)\alpha^{(i,i_0)}(t_1,t_2) 
\end{equation}
and
\begin{equation} \label{eq:gamma}
\gamma^{(i)}(t_1,t_2) = \gamma^{(i,i_0)}(t_1,t_2)\alpha^{(i,i_1)}(t_1,t_2) +  \gamma^{(i,i_1)}(t_1,t_2)\alpha^{(i,i_0)}(t_1,t_2), 
\end{equation}
where $\vol^{(i)}$ is the volume of the rectangular solid spanned by the three vectors~$\vertex^{(i_2)}-\vertex^{(i_0)}$, 
$\vertex^{(i_4)}-\vertex^{(i_0)}$ and $\vertex^{(i_6)}-\vertex^{(i_0)}$, i.e.
\[
 \vol^{(i)} = \det (\vertex^{(i_2)}-\vertex^{(i_0)},\vertex^{(i_4)}-\vertex^{(i_0)},\vertex^{(i_6)}-\vertex^{(i_0)}),
\] 
with the four vertices~$\vertex^{(i_0)}$, $\vertex^{(i_2)}$, $\vertex^{(i_4)}$ and $\vertex^{(i_6)}$ of the common 
face~$\face^{(i)}$. In addition, the functions~$\alpha^{(i,i_0)}$ and $\alpha^{(i,i_1)}$ can be written as
\begin{equation*} \label{eq:split_alpha0}
 \alpha^{(i,i_0)}(t_1,t_2) = \lambda \, \vol^{(i)} \left( \delta^{(i,i_0)}(t_1,t_2) -t_1 \gamma^{(i,i_0)}(t_1,t_2) - t_2 
 \beta^{(i,i_0)}(t_1,t_2) \right)
\end{equation*}
and
\begin{equation*} \label{eq:split_alpha1}
 \alpha^{(i,i_1)}(t_1,t_2) = - \lambda \, \vol^{(i)} \left( \delta^{(i,i_1)}(t_1,t_2) -t_1 \gamma^{(i,i_1)}(t_1,t_2) - t_2 
 \beta^{(i,i_1)}(t_1,t_2) \right),
\end{equation*}
with bilinear polynomial functions
\begin{equation*} \label{eq:delta0}
 \delta^{(i,i_0)}(t_1,t_2)=\frac{\det (\vertex^{(i_2)}-\vertex^{(i_0)},\vertex^{(i_4)}-\vertex^{(i_0)},\partial_1 
 \map^{(i_0)}(0,t_1,t_2))}{\vol^{(i)}}
\end{equation*}
and
\begin{equation*} \label{eq:delta1}
 \delta^{(i,i_1)}(t_1,t_2)=\frac{\det (\vertex^{(i_2)}-\vertex^{(i_0)},\vertex^{(i_4)}-\vertex^{(i_0)},\partial_2 \map^{(i_1)}(t_1,0,t_2))}{\vol^{(i)}}.
\end{equation*}

In case of a boundary face~$\face^{(i)}$, $i \in \indexFaceBoundary$, which is contained in the closure of the 
patch~$\patch^{(i_0)}$, $i_0 \in \indexPatch$, i.e. $\face^{(i)} \subset \overline{\patch^{(i_0)}}$, we will similarly assume as in the case of an 
inner face shown in Fig.~\ref{fig:standard_two-patch_case}, that the 
corresponding geometry mapping~$\map^{(i_0)}$ is (re)parameterized in such a way that the face~$\overline{\face^{(i)}}$ is given by
\begin{equation} \label{eq:standard_form_one_patch} 
 \map^{(i_0)}(0,t_1,t_2),  \mbox{ } (t_1,t_2) \in [0,1]^2, 
\end{equation}
and that the $8$ vertices of the patch~$\overline{\patch^{(i_0)}}$ are labeled as $\vertex^{(i_0)}, \vertex^{(i_1)}, \ldots, \vertex^{(i_7)}$. We say 
that the geometry mapping~$\map^{(i_0)}$ is given in \textit{standard form} with respect to its boundary face~$\face^{(i)}$ when it 
fulfills~\eqref{eq:standard_form_one_patch}.

Below, inspired by the work in~\cite{BiKa19} for the case of trilinearly parameterized two-patch volumes, we will assume that the 
considered trilinear multi-patch volumes~$\overline{\domain}$ satisfy the following assumption:

\begin{ass} \label{ass:domain}
Each inner face~$\face^{(i)}$, $i \in \indexFaceInner$, is non-planar, and its 
functions~$\alpha^{(i,i_0)}$, $\alpha^{(i,i_1)}$, 
$\beta^{(i)}$ and $\gamma^{(i)}$ possess full bidegrees~$(2,2)$, $(2,2)$, $(3,2)$ and $(2,3)$, respectively, its functions $\beta^{(i)}$ and 
$\gamma^{(i)}$ have no roots in the values~$(j_1 \, h, j_2 \, h)$, $j_1,j_2=1,2,\ldots,k$, and the greatest common divisor of its 
functions~$\alpha^{(i,i_0)}$ and $\alpha^{(i,i_1)}$ is a constant function.
\end{ass}
Note that each trilinear multi-patch volume~$\overline{\domain}$, which does not satisfy Assumption~\ref{ass:domain}, can be enforced to fulfill 
the assumption by slightly disturbing some of the values of their vertices~$\vertex^{(i)}$, $i \in \indexVertex$.

\subsection{$C^1$ continuity condition across inner faces}

As already mentioned in Section~\ref{subsec:concept_C1spaces}, an isogeometric function~$\phi \in \V$ belongs to the $C^1$~isogeometric spline 
space~$\V^1$ if and only if for any two neighboring patches~$\patch^{(i_0)}$ and $\patch^{(i_1)}$, $i_0,i_1 \in \indexPatch$, with the common 
inner face~$\overline{\face^{(i)}} = \overline{\patch^{(i_0)}} \cap \overline{\patch^{(i_1)}}$, $i \in \indexFaceInner$, the associated graph patches 
$\left[\begin{array}{cc}
  \map^{(i_0)} &
  \phi \circ \map^{(i_0)}
 \end{array} \right]^T$
and 
$\left[\begin{array}{cc}
  \map^{(i_1)} & 
  \phi \circ \map^{(i_1)}
 \end{array} \right]^T$ 
meet with $G^1$ continuity. Let us consider two such neighboring patches $\patch^{(i_0)}$ and $\patch^{(i_1)}$, $i_0,i_1 \in \indexPatch$, 
and let us assume without loss of generality that the two corresponding geometry mappings~$\map^{(i_0)}$ and $\map^{(i_1)}$ are given in standard 
form~\eqref{eq:standard_form_two_patches} with respect to the common inner face~$\overline{\face^{(i)}}$, $i \in \indexFaceInner$. 
We further consider an isogeometric function~$\phi \in \V$, and recall that we denote 
by $\fstandard{i_0}$ and $\fstandard{i_1}$ its associated spline functions~$\phi \circ \map^{(i_0)}$ and $\phi \circ \map^{(i_1)}$, respectively. 
Then, the two graph patches $\left[\begin{array}{cc}
  \map^{(i_0)} &
  \fstandard{i_0}
 \end{array} \right]^T$
and 
$\left[\begin{array}{cc}
  \map^{(i_1)} & 
  \fstandard{i_1}
 \end{array} \right]^T$ 
are $G^1$-continuous if 
\[
 \left[\begin{array}{c}
  \map^{(i_0)} (0,t_1,t_2) \\
  \fstandard{i_0} (0,t_1,t_2)
 \end{array} \right]
 =
 \left[\begin{array}{c}
  \map^{(i_1)} (t_1,0,t_2) \\
  \fstandard{i_1} (t_1,0,t_2)
 \end{array} \right] , 
\quad 
 (t_1,t_2) \in [0,1]^2,
 \]
and if there exist bivariate functions~
$\widetilde{\alpha}^{(i,i_0)}, \widetilde{\alpha}^{(i,i_1)}, 
\widetilde{\beta}^{(i)}, \widetilde{\gamma}^{(i)}:[0,1]^2 \rightarrow \R$ with $\widetilde{\alpha}^{(i,i_0)} 
\widetilde{\alpha}^{(i,i_1)} >0$, such that
\begin{gather}
 \widetilde{\alpha}^{(i,i_0)}(t_1,t_2) \left[\begin{array}{c}
 \partial_2 \map^{(i_1)} (t_1,0,t_2) \\
 \partial_2 \fstandard{i_1} (t_1,0,t_2)
 \end{array} \right] + \widetilde{\alpha}^{(i,i_1)}(t_1,t_2) \left[\begin{array}{c}
  \partial_1 \map^{(i_0)} (0,t_1,t_2) \\
  \partial_1 \fstandard{i_0} (0,t_1,t_2)
 \end{array} \right] 
  - \nonumber \\[-0.3cm] 
 \label{eq:cond_all_G1} \\[-0.3cm] 
 \widetilde{\beta}^{(i)}(t_1,t_2) \left[\begin{array}{c}
  \partial_1 \map^{(i_1)} (t_1,0,t_2) \\
  \partial_1 \fstandard{i_1} (t_1,0,t_2)
 \end{array} \right] - \widetilde{\gamma}^{(i)}(t_1,t_2) \left[\begin{array}{c}
  \partial_3 \map^{(i_1)} (t_1,0,t_2) \\
  \partial_3 \fstandard{i_1} (t_1,0,t_2)
 \end{array} \right]
 =  \f{0}, \nonumber
\end{gather}
for $(t_1,t_2) \in [0,1]^2$. Since the geometry mappings~$\map^{(i_0)}$ and $\map^{(i_1)}$ are given, the first three coordinate rows in 
equation~\eqref{eq:cond_all_G1} uniquely determine the functions~$\widetilde{\alpha}^{(i,i_0)}$, $\widetilde{\alpha}^{(i,i_1)}$, 
$\widetilde{\beta}^{(i)}$ and $\widetilde{\gamma}^{(i)}$ up to a common function~$\widetilde{\lambda}:[0,1]^2 \rightarrow \R$. Without loss of 
generality, we choose the functions~$\widetilde{\alpha}^{(i,i_0)}$, $\widetilde{\alpha}^{(i,i_1)}$, $\widetilde{\beta}^{(i)}$ and 
$\widetilde{\gamma}^{(i)}$ as the functions $\alpha^{(i,i_0)}$, $\alpha^{(i,i_1)}$, $\beta^{(i)}$ and $\gamma^{(i)}$ defined in 
equation~\eqref{eq:gldata}, which fulfill the first three coordinate rows in equation~\eqref{eq:cond_all_G1} , cf. equation~\eqref{eq:cond_mapping}. 
Therefore, the isogeometric function~$\phi$ is $C^1$ across the inner face~$\overline{\face^{(i)}}$ if and only if
\begin{equation} \label{eq:cond_C0}
 \fstandard{i_0}(0,t_1,t_2) = \fstandard{i_1}(t_1,0,t_2),  \mbox{ } (t_1,t_2) \in [0,1]^2, 
\end{equation} 
and
\begin{gather} 
 \alpha^{(i,i_0)}(t_1,t_2) \partial_2 \fstandard{i_1}(t_1,0,t_2) + \alpha^{(i,i_1)}(t_1,t_2) \partial_1 \fstandard{i_0}(0,t_1,t_2) - \nonumber \\[-0.3cm] \label{eq:cond_C1_1} \\[-0.3cm] 
 \beta^{(i)}(t_1,t_2) \partial_1 \fstandard{i_1}(t_1,0,t_2) - \gamma^{(i)}(t_1,t_2) \partial_3 \fstandard{i_1}(t_1,0,t_2)
 =  0, \nonumber
\end{gather}
for $(t_1,t_2) \in [0,1]^2$, and the function~$\phi$ is globally $C^1$ on~$\overline{\domain}$, i.e. $\phi \in \V^1$, if and only if the pull-back of the
function~$\phi$ satisfies equations~\eqref{eq:cond_C0} and \eqref{eq:cond_C1_1} for any inner face~$\overline{\face^{(j)}}$, $j \in \indexFaceInner$.

Let us consider the $C^1$ continuity condition~\eqref{eq:cond_C1_1} of the isogeometric function~$\phi$ across the inner 
face~$\overline{\face^{(i)}}$ in more detail. By using relations~\eqref{eq:beta}, \eqref{eq:gamma} and \eqref{eq:cond_C0}, the 
condition~\eqref{eq:cond_C1_1} is equivalent to
\begin{gather} 
\frac{\partial_1 \fstandard{i_0}(0,t_1,t_2) - \beta^{(i,i_0)}(t_1,t_2)\partial_2 \fstandard{i_0}(0,t_1,t_2) - \gamma^{(i,i_0)}(t_1,t_2) 
\partial_3 \fstandard{i_0}(0,t_1,t_2) }{\alpha^{(i,i_0)}(t_1,t_2)} = \nonumber \\[-0.3cm] 
 \label{eq:cond_C1_2} \\[-0.3cm] 
 - \frac{\partial_2 \fstandard{i_1}(t_1,0,t_2) - \beta^{(i,i_1)}(t_1,t_2)\partial_1 \fstandard{i_1}(t_1,0,t_2) - \gamma^{(i,i_1)}(t_1,t_2) 
\partial_3 \fstandard{i_1}(t_1,0,t_2) }{\alpha^{(i,i_1)}(t_1,t_2)} . \nonumber
\end{gather}
Let $\ftrace :[0,1]^2 \rightarrow \R$ and $\fder: [0,1]^2 \rightarrow \R$ be the two bivariate functions, which are determined by the equally valued 
terms in equation~\eqref{eq:cond_C0} and \eqref{eq:cond_C1_2}, respectively. While the function~$\ftrace$ is just the trace of the function~$\phi$ at 
the common inner face~$\overline{\Gamma^{(i)}}$, i.e. 
\[
\ftrace(t_1,t_2)=\phi \circ \map^{(i_0)}(0,t_1,t_2) = \phi \circ \map^{(i_1)}(t_1,0,t_2), 
\quad (t_1,t_2) \in [0,1]^2,
\]
the function~$\fder$ is the directional derivative of $\phi$ with respect to the transversal direction~$\f{d}^{(i)}$ at the common inner
face~$\overline{\face^{(i)}}$, i.e.
\[
 \fder(t_1,t_2)=\nabla \phi \cdot \f{d}^{(i)} \circ \map^{(i_0)}(0,t_1,t_2) = \nabla \phi \cdot \f{d}^{(i)} \circ \map^{(i_1)}(t_1,0,t_2), 
 \quad (t_1,t_2) \in [0,1]^2,
\]
with $\f{d}^{(i)}=\f{d}^{(i,i_0)}=\f{d}^{(i,i_1)}$ on~$\overline{\face^{(i)}}$ given as
\[
 \f{d}^{(i,i_0)} \circ \map^{(i_0)}(0,t_1,t_2)= \grad \f{F}^{(i_0)}(0,t_1,t_2) \cdot  \left[\begin{array}{ccc} -\beta^{(i,i_0)}(t_1,t_2) &   -\gamma^{(i,i_0)}(t_1,t_2) &  1
 \hspace{-0.15cm}  \end{array} \right]^T  
 \frac{1}{\alpha^{(i,i_0)}(t_1,t_2)}
\]
and
\[
 \f{d}^{(i,i_1)} \circ \map^{(i_1)}(t_1,0,t_2)= \grad \f{F}^{(i_1)}(t_1,0,t_2) \cdot  \left[\begin{array}{ccc}
   \beta^{(i,i_1)}(t_1,t_2) &   \gamma^{(i,i_1)}(t_1,t_2) &   -1  
 \end{array} \right]^T \frac{1}{\alpha^{(i,i_1)}(t_1,t_2)},
\]
cf.~\cite{BiKa19}. The use of the functions~$\ftrace$ and $\fder$ directly implies that
\[
 \fstandard{i_0}(0,t_1,t_2) = \fstandard{i_1}(t_1,0,t_2) = \ftrace(t_1,t_2),
\]
\[
 \partial_1 \fstandard{i_0}(0,t_1,t_2) = \beta^{(i,i_0)}(t_1,t_2) \partial_1 \ftrace(t_1,t_2) + \gamma^{(i,i_0)}(t_1,t_2) \partial_2 \ftrace(t_1,t_2) 
 + \alpha^{(i,i_0)}(t_1,t_2) \fder(t_1,t_2),
\]
and
\[
 \partial_2 \fstandard{i_1}(t_1,0,t_2) = \beta^{(i,i_1)}(t_1,t_2) \partial_1 \ftrace(t_1,t_2) + \gamma^{(i,i_1)}(t_1,t_2) \partial_2 \ftrace(t_1,t_2) 
 - \alpha^{(i,i_1)}(t_1,t_2) \fder(t_1,t_2).
\] 
Then, the Taylor expansion of $\fstandard{i_0} = \phi \circ \map^{(i_0)}$ at $(\xi_1,\xi_2,\xi_3) = (0,\xi_2,\xi_3)$, and the 
Taylor expansion of $\fstandard{i_1} = \phi \circ \map^{(i_1)}$ at $(\xi_1,\xi_2,\xi_3) = (\xi_1,0,\xi_3)$ is given by
\begin{eqnarray}
\fstandard{i_0}(\xi_1,\xi_2,\xi_3) & = & \ftrace(\xi_2,\xi_3) +  \big( \beta^{(i,i_0)}(\xi_2,\xi_3) \partial_1 \ftrace(\xi_2,\xi_3) + \nonumber \\[-0.3cm] 
 \label{eq:Taylor_f0} \\[-0.3cm]
 & &  \gamma^{(i,i_0)}(\xi_2,\xi_3) \partial_2 \ftrace(\xi_2,\xi_3) + \alpha^{(i,i_0)}(\xi_2,\xi_3) \fder(\xi_2,\xi_3) \big) \xi_1 + 
  \mathcal{O}(\xi_1^2), \nonumber
\end{eqnarray}
and by 
\begin{eqnarray}  
\fstandard{i_1}(\xi_1,\xi_2,\xi_3) & = & \ftrace(\xi_1,\xi_3) +  \big( \beta^{(i,i_1)}(\xi_1,\xi_3) \partial_1 \ftrace(\xi_1,\xi_3) + \nonumber \\[-0.3cm] 
 \label{eq:Taylor_f1} \\[-0.3cm]
  & & \gamma^{(i,i_1)}(\xi_1,\xi_3) \partial_2 \ftrace(\xi_1,\xi_3) - \alpha^{(i,i_1)}(\xi_1, \xi_3) \fder(\xi_1,\xi_3) \big) \xi_2 + 
  \mathcal{O}(\xi_2^2) , \nonumber
\end{eqnarray}
respectively, cf.~\cite{BiKa19}. The representations~\eqref{eq:Taylor_f0} and \eqref{eq:Taylor_f1} of an isogeometric function~$\phi \in \V^1$ 
along an inner face~$\overline{\face^{(i)}}$, $i \in \indexFaceInner$, will be used in the next sections to study and to 
design the $C^1$~isogeometric spline space~$\V^1$ for trilinear multi-patch volumes~$\overline{\domain}$.

\section{Design and study of $C^1$ isogeometric spline spaces} \label{sec:design_study_C1spaces}

In~\cite{BiKa19}, the $C^1$ isogeometric spline space~$\V^1$ for the case of a trilinear two-patch volume~$\overline{\domain}$ was studied, and a 
basis for the space~$\V^1$ was constructed. This basis consists of $C^1$~functions, which possess simple explicit representations and have small local 
supports, and will be used in this and in the next section as a tool for the investigation of the $C^1$~isogeometric spline space~$\V^1$ for 
the case of a trilinear multi-patch volume~$\overline{\domain}$. More precisely, we will present in this section for the trilinear multi-patch 
volumes~$\overline{\domain}$ a general framework for design of the corresponding $C^1$ isogeometric spline space~$\V^1$ and of an associated basis, and will 
then focus in the next section on a more detailed study for a particular subclass of trilinearly parameterized multi-patch volumes~$\overline{\domain}$.
Before, we will introduce some needed additional notations and definitions, and will briefly recall the construction~\cite{BiKa19} for the two-patch case.

\subsection{Preliminaries} \label{subsec:preliminaries}

Let $\I_0=\{0,1,\ldots,n_{0}-1 \}$ and $\I_1=\{0,1,\ldots, n_{1}-1 \}$ with $n_0=\dim \US{p}{r+1}{h} =  p +1 + k(p-r-1)$ and 
$n_1=\dim \US{p-2}{r}{h} =  p-1 + k(p-r-2)$ be the index sets of the univariate 
B-splines~$\UN{p}{r+1}{j}$ and $\UN{p-2}{r}{j}$ of the spline spaces~$\US{p}{r+1}{h}$ and $\US{p-2}{r}{h}$, respectively. In addition, let 
$\UR{p}{r+1}{j}: [0,1] \rightarrow \R$, $j \in \I_0$, be the spline functions defined by
\[
 \UR{p}{r+1}{j}(\xi)= \begin{cases}
                           -h \UN{p-1}{r}{j}(\xi) & \mbox{if } j =0, \\
                           h\,(\UN{p-1}{r}{j-1}(\xi) - \UN{p-1}{r}{j}(\xi)  ) \; & \mbox{if } j \in \I_0 \setminus \{ 0,n_0-1 \},\\
                           h \UN{p-1}{r}{j-1}(\xi) \; & \mbox{if } j=n_0-1,
                          \end{cases}
\]
and let $\UR{\f{p}}{\f{r+1}}{j_1,j_2}$ be the tensor-product spline functions $\UR{\f{p}}{\f{r+1}}{j_1,j_2} = 
\UR{p}{r+1}{j_1}\UR{p}{r+1}{j_2}$, $j_1,j_2 \in \I_0$.
We further denote by $\UM{p}{r}{j}:[0,1] \rightarrow \R$, $j=0,1$, the basis transformation of the B-splines~$\UN{p}{r}{j}$, $j=0,1$, given by
\[
\UM{p}{r}{0}(\xi) = \UN{p}{r}{0}(\xi) + \UN{p}{r}{1}(\xi) \quad {\rm and } \quad \UM{p}{r}{1}(\xi) = \frac{h}{p} \UN{p}{r}{1}(\xi),
\]
which fulfill $\partial_{\xi}^{i} \UM{p}{r}{j}(0) = \delta_j^i$, $i=0,1$, where $\delta_j^i$ is the Kronecker delta.

Recall that for any inner face~$\face^{(i)}$, $i \in \indexFaceInner$, with the two neighboring patches~$\patch^{(i_0)}$ and 
$\patch^{(i_1)}$, $i_0,i_1 \in \indexPatch$, i.e. $\overline{\face^{(i)}} = \overline{\patch^{(i_0)}} \cap \overline{\patch^{(i_1)}} $, we can assume
that the two associated geometry mappings~$\map^{(i_0)}$ and $\map^{(i_1)}$ can be reparameterized (if necessary) to be in standard 
form~\eqref{eq:standard_form_two_patches}. Similarly, for any boundary face~$\face^{(i)}$, $i \in \indexFaceBoundary$, which is contained in the 
closure of the patch~$\patch^{(i_0)}$, $i_0 \in \indexPatch$, i.e. $\face^{(i)} \subset \overline{\patch^{(i_0)}}$, the associated geometry 
mapping~$\map^{(i_0)}$ can be assumed to be in standard form~\eqref{eq:standard_form_one_patch}. 

We define on the trilinear multi-patch volume~$\overline{\domain}$ for each patch~$\patch^{(i)}$, $i \in \indexPatch$, the isogeometric 
functions~$\phiPatch{i}{j_1}{j_2}{j_3}$, $i=0,1$, $j_1,j_2,j_3 \in \I$, for each boundary face~$\face^{(i)}$, $i \in \indexFaceBoundary$, the 
isogeometric functions $\phiFace{i}{j_1}{j_2}{j_3}$, $j_1=0,1$, $j_2,j_3 \in \I$,
and for each inner face~$\face^{(i)}$, $i \in \indexFaceInner$, the isogeometric functions $\phiFace{i}{j_1}{j_2}{j_3}$, $j_1=0,1$, $j_2,j_3 \in \I_{j_1}$, 
whose spline functions~$\fPatch{i}{j_1}{j_2}{j_3}{\tau} = \phiPatch{i}{j_1}{j_2}{j_3} \circ \map^{(\tau)}$ and $\fFace{i}{j_1}{j_2}{j_3}{\tau} = 
\phiFace{i}{j_1}{j_2}{j_3} \circ \map^{(\tau)}$ for $\tau \in \indexPatch$, possess the form
\begin{equation*}
 \fPatch{i}{j_1}{j_2}{j_3}{\tau}(\xi_1,\xi_2,\xi_3) = 
 \begin{cases}
  \UN{\f{p}}{\f{r}}{j_1,j_2,j_3}(\xi_1,\xi_2,\xi_3) & \mbox{if }\tau=i, \\
  0 & \mbox{otherwise},
 \end{cases}
\end{equation*}
\begin{equation*}
 \fFace{i}{j_1}{j_2}{j_3}{\tau}(\xi_1,\xi_2,\xi_3) =  \fPatch{i_0}{j_1}{j_2}{j_3}{\tau}(\xi_1,\xi_2,\xi_3),
\end{equation*}
for $i \in \indexFaceBoundary$, and
\begin{equation*} \label{eq:edge0}
 \fFace{i}{0}{j_2}{j_3}{\tau}(\xi_1,\xi_2,\xi_3) = 
 \begin{cases}
 \begin{array}{c}
  \UN{\f{p}}{\f{r+1}}{j_2,j_3}(\xi_2,\xi_3) \UM{p}{r}{0}(\xi_1)+  \big( \beta^{(i,i_0)}(\xi_2,\xi_3) \partial_1 
  \UN{\f{p}}{\f{r+1}}{j_2,j_3}(\xi_2,\xi_3) + \\
  \gamma^{(i,i_0)}(\xi_2,\xi_3) \partial_2 \UN{\f{p}}{\f{r+1}}{j_2,j_3}(\xi_2,\xi_3) + \\
  \alpha^{(i,i_0)}(\xi_2,\xi_3) \frac{p}{\lambda \, \vol^{(i)}} \UR{\f{p}}{\f{r+1}}{j_2,j_3}(\xi_2,\xi_3) \big) \UM{p}{r}{1}(\xi_1),
 \end{array}
 & $\hspace{-0.4cm} $\mbox{if }\tau=i_0, \\
  \begin{array}{c}
  \UN{\f{p}}{\f{r+1}}{j_2,j_3}(\xi_1,\xi_3) \UM{p}{r}{0}(\xi_2)+  \big( \beta^{(i,i_1)}(\xi_1,\xi_3) \partial_1 
  \UN{\f{p}}{\f{r+1}}{j_2,j_3}(\xi_1,\xi_3) +  \\
  \gamma^{(i,i_1)}(\xi_1,\xi_3) \partial_2 \UN{\f{p}}{\f{r+1}}{j_2,j_3}(\xi_1,\xi_3) - \\
 \alpha^{(i,i_1)}(\xi_1,\xi_3) \frac{p}{\lambda \, \vol^{(i)}} \UR{\f{p}}{\f{r+1}}{j_2,j_3}(\xi_1,\xi_3) \big) \UM{p}{r}{1}(\xi_2),
 \end{array}
  & $\hspace{-0.4cm} $ \mbox{if }\tau=i_1, \\
  \quad 0 & $\hspace{-0.6cm} $ \mbox{otherwise},
 \end{cases}
 \end{equation*}
\begin{equation*} \label{eq:edge1}
 \fFace{i}{1}{j_2}{j_3}{\tau}(\xi_1,\xi_2,\xi_3) = 
 \begin{cases}
  \alpha^{(i,i_0)}(\xi_2,\xi_3) \UN{\f{p-2}}{\f{r}}{j_2,j_3}(\xi_2,\xi_3) \UM{p}{r}{1}(\xi_1)  & \mbox{if }\tau=i_0, \\
  -\alpha^{(i,i_1)}(\xi_1,\xi_3) \UN{\f{p-2}}{\f{r}}{j_2,j_3}(\xi_1,\xi_3) \UM{p}{r}{1}(\xi_2)  & \mbox{if } \tau=i_1, \\
  0 & \mbox{otherwise},
 \end{cases}
\end{equation*}
for $i \in \indexFaceInner$.

The functions $\phiPatch{i}{j_1}{j_2}{j_3}$, $i \in \indexPatch$, $j_1,j_2,j_3 \in \I$, are just the standard isogeometric spline functions, 
and are $C^1$ on $\overline{\domain}$, if they have vanishing values and gradients at 
all inner faces~$\overline{\face^{(\ell)}}$, $\ell \in \indexFaceInner$, i.e. $\phiPatch{i}{j_1}{j_2}{j_3}(\overline{\face^{(\ell)}})=0$ and 
$\nabla \phiPatch{i}{j_1}{j_2}{j_3}(\overline{\face^{(\ell)}})=\f{0}$. For $j_1,j_2,j_3 \in \I \setminus \{0,1,n-2,n-1 \}$, it is guaranteed that 
the function $\phiPatch{i}{j_1}{j_2}{j_3}$ belongs to the space~$\V^1$.

For a boundary face~$\face^{(i)}$, $i \in \indexFaceBoundary$, the functions $\phiFace{i}{j_1}{j_2}{j_3}$, $j_1=0,1$, $j_2,j_3 \in \I$, are 
again just the standard isogeometric spline functions, which are analogously as before $C^1$ on $\overline{\domain}$, if the functions possess 
vanishing values and gradients at all inner faces~$\overline{\face^{(\ell)}}$, $\ell \in \indexFaceInner$, i.e. 
$\phiFace{i}{j_1}{j_2}{j_3}(\overline{\face^{(\ell)}})=0$ and $\nabla \phiFace{i}{j_1}{j_2}{j_3}(\overline{\face^{(\ell)}})=\f{0}$.
Now, the function $\phiFace{i}{j_1}{j_2}{j_3}$ belongs in any case to the space~$\V^1$, if $j_1=0,1$, $j_2,j_3 \in \I \setminus \{0,1,n-2,n-1 \}$.

For an inner face~$\face^{(i)}$, $i \in \indexFaceInner$, the functions $\phiFace{i}{j_1}{j_2}{j_3}$, $j_1=0,1$, $j_2,j_3 \in \I_{j_1}$, 
are defined by the spline representations~\eqref{eq:Taylor_f0} and \eqref{eq:Taylor_f1}, which are restricted in $\xi_1$- and $\xi_2$-direction, 
respectively, to the first two B-splines~$\UN{p}{r}{0}$ and $\UN{p}{r}{1}$, and are obtained by selecting the functions~$\ftrace$ and $\fder$ as 
\begin{equation} \label{eq:f0f1_func0}
\ftrace(t_1,t_2)=\UN{\f{p}}{\f{r+1}}{j_2,j_3}(t_1,t_2) \mbox{ and }\fder(t_1,t_2)=\frac{p}{\lambda \, \vol^{(i)}} \UR{\f{p}}{\f{r+1}}{j_2,j_3}(t_1,t_2)  
\end{equation}
for the functions~$\phiFace{i}{0}{j_2}{j_3}$ and as 
\begin{equation} \label{eq:f0f1_func1}
\ftrace(t_1,t_2)=0 \mbox{ and } \fder(t_1,t_2)=\UN{\f{p-2}}{\f{r}}{j_2,j_3}(t_1,t_2)
\end{equation}
for the functions~$\phiFace{i}{1}{j_2}{j_3}$. The use of the representations~\eqref{eq:Taylor_f0} and
\eqref{eq:Taylor_f1} implies that the functions $\phiFace{i}{j_1}{j_2}{j_3}$, $i \in \indexFaceInner$, $j_1=0,1$, $j_2,j_3 \in \I_{j_1}$, are $C^1$ 
on the two-patch volume $\overline{\patch^{(i_0)}} \cup \overline{\patch^{(i_1)}}$, and the choices~\eqref{eq:f0f1_func0} and \eqref{eq:f0f1_func1} 
for the functions~$\ftrace$ and $\fder$ further guarantee that the functions~$\phiFace{i}{j_1}{j_2}{j_3}$ are linearly independent and that their 
spline functions $\fFace{i}{j_1}{j_2}{j_3}{\tau}$ belong to the space $\US{\f{p}}{\f{r}}{h}$ for all $\tau \in \indexPatch$, cf.~\cite{BiKa19}. If a  
function $\fFace{i}{j_1}{j_2}{j_3}{ \tau}$ has vanishing values and gradients at all other inner faces~$\overline{\face^{(\ell)}}$, $\ell \in 
\indexFaceInner \setminus \{ i \}$, i.e. $\phiFace{i}{j_1}{j_2}{j_3}(\overline{\face^{(\ell)}})=0$ and 
$\nabla \phiFace{i}{j_1}{j_2}{j_3}(\overline{\face^{(\ell)}})=\f{0}$, the function is also $C^1$ on $\overline{\domain}$, and therefore belongs to 
the space~$\V^1$. This is guaranteed for the case of $j_1=0,1$, $j_2,j_3 \in \I_{j_1} \setminus \{0,\ldots,2-j_1,n_{j_1}-3+j_1,\ldots,n_{j_1}-1 \}$.  

\subsection{The two-patch case} \label{subsec:two_patch_case}

In this subsection, let us restrict to a trilinear two-patch volume~$\overline{\domain} = \overline{\patch^{(0)}} 
\cup \overline{\patch^{(1)}}$, where the one inner face $\overline{\patch^{(0)}} \cap \overline{\patch^{(1)}}$ is labeled by $\overline{\face^{(0)}}$.
We assume without loss of generality that the two geometry mappings~$\map^{(0)}$ and $\map^{(1)}$ are given in standard 
form~\eqref{eq:standard_form_two_patches}, that is, the face~$\overline{\face^{(0)}}$ is parameterized by
\[
\map^{(0)}(0,t_1,t_2) = \map^{(1)}(t_1,0,t_2),  \mbox{ } (t_1,t_2) \in [0,1]^2.
\]
Let us recall now the construction~\cite{BiKa19} for the $C^1$ isogeometric spline space~$\V^1$ over the two-patch volume~$\overline{\domain}$.
The $C^1$ isogeometric spline space~$\V^1$ can be decomposed into the direct sum
\begin{equation} \label{eq:direct_sum_TwoPatch}
 \V^1 = \SpacePatchTwo{0} \oplus \SpacePatchTwo{1} \oplus \SpaceFaceTwo{0},
\end{equation}
with the subspaces
\begin{eqnarray*}
 \SpacePatchTwo{0} &  = &  \Big\{ \phi \in \V^1 \; | \; \fstandard{1}(\xi_1,\xi_2,\xi_3)=0, \; \fstandard{0}(\xi_1,\xi_2,\xi_3)= \\
 & &  \sum_{j_1=2}^{n-1} \sum_{j_2=0}^{n-1} \sum_{j_3=0}^{n-1} \coeff_{j_1,j_2,j_3}^{(0)} \UN{\f{p}}{\f{r}}{j_1,j_2,j_3} (\xi_1,\xi_2,\xi_3), \;
 \coeff_{j_1,j_2,j_3}^{(0)} \in \R \Big\} ,  
\end{eqnarray*}
\begin{eqnarray*}
 \SpacePatchTwo{1} &  = & \Big\{ \phi \in \V^1 \; | \; \fstandard{0}(\xi_1,\xi_2,\xi_3)=0, \; \fstandard{1}(\xi_1,\xi_2,\xi_3)= \\
 & & \sum_{j_1=0}^{n-1} \sum_{j_2=2}^{n-1} \sum_{j_3=0}^{n-1} \coeff_{j_1,j_2,j_3}^{(1)} \UN{\f{p}}{\f{r}}{j_1,j_2,j_3} (\xi_1,\xi_2,\xi_3), \;
 \coeff_{j_1,j_2,j_3}^{(1)} \in \R \Big\} , 
\end{eqnarray*}
and
\begin{eqnarray}
 \SpaceFaceTwo{0} & = & \Big\{ \phi \in \V^1 \; | \; \fstandard{0}(\xi_1,\xi_2,\xi_3)=\sum_{j_1=0}^{1} \sum_{j_2=0}^{n-1} \sum_{j_3=0}^{n-1} 
 \coeff_{j_1,j_2,j_3}^{(0)} \UN{\f{p}}{\f{r}}{j_1,j_2,j_3} (\xi_1,\xi_2,\xi_3),
 \nonumber \\
 & & \fstandard{1}(\xi_1,\xi_2,\xi_3) = \sum_{j_1=0}^{n-1} \sum_{j_2=0}^{1} \sum_{j_3=0}^{n-1} \coeff_{j_1,j_2,j_3}^{(1)} 
 \UN{\f{p}}{\f{r}}{j_1,j_2,j_3} (\xi_1,\xi_2,\xi_3), \; \coeff_{j_1,j_2,j_3}^{(i)} \in \R, \; i=0,1 \Big\} . \nonumber 
\end{eqnarray}
The subspaces~$\SpacePatchTwo{0}$, $\SpacePatchTwo{1}$ and $\SpaceFaceTwo{0}$ can be equivalently described as
\begin{equation*}
 \SpacePatchTwo{0} = \Span \left\{ \phiPatch{0}{j_1}{j_2}{j_3}  \;|\; j_2,j_3 \in \I, \, j_1 \in \I \setminus \{0,1 \}  \right\},
\end{equation*}
\begin{equation*}
 \SpacePatchTwo{1} = \Span \left\{ \phiPatch{1}{j_1}{j_2}{j_3}  \;|\; j_1,j_3 \in \I,\, j_2 \in \I \setminus \{0,1 \}  \right\},
\end{equation*}
and
\begin{equation*} 
 \SpaceFaceTwo{0} = \Span \left\{ \phiFace{0}{j_1}{j_2}{j_3}  \;|\; j_1=0,1, \; j_2,j_3 \in \I_{j_1}  \right\},
\end{equation*} 
respectively, cf.~\cite{BiKa19}. Then, as a direct consequence of the use of the direct sum~\eqref{eq:direct_sum_TwoPatch} for the 
representation of the $C^1$ space~$\V^1$, the dimension of $\V^1$ is given by
\[
\dim \V^1 = \dim \SpacePatchTwo{0} + \dim \SpacePatchTwo{1} + \dim \SpaceFaceTwo{0} =  2 |\I|^2(|\I|-2) + |\I_0|^2 + |\I_1|^2.
\]

\subsection{A general framework for the design} \label{subsec:general_framework}

We will present a general framework for the construction of the $C^1$ isogeometric spline space~$\V^1$ over a trilinear multi-patch 
volume~$\overline{\domain}$, which will be based as in the two-patch case on the decomposition of the space~$\V^1$ into the direct sum 
of simpler subspaces. 

Clearly, the space~$\V^1$ can be just described as the direct sum
\begin{equation*}
 \V^1= \left( \bigoplus_{i \in \indexPatch} \SpacePatch{i} \right) \oplus \SpaceFaceAll,
\end{equation*}
where the single subspaces~$\SpacePatch{i}$, $i \in \indexPatch$, and $\SpaceFaceAll$ are given as
\begin{eqnarray*}
 \SpacePatch{i}  & = &  \Big\{ \phi \in \V^1 \; | \; \fstandard{i}(\xi_1,\xi_2,\xi_3) = \sum_{j_1=2}^{n-3} \sum_{j_2=2}^{n-3} \sum_{j_3=2}^{n-3} \coeff_{j_1,j_2,j_3}^{(i)}
\UN{\f{p}}{\f{r}}{j_1,j_2,j_3}(\xi_1,\xi_2,\xi_3), \; \coeff_{j_1,j_2,j_3}^{(i)} \in \R, \\
& & \fstandard{\ell}(\xi_1,\xi_2,\xi_3) = 0,\, \ell \in \indexPatch \setminus \{i\} \Big\}, 
\end{eqnarray*}
and
\begin{eqnarray}
 \SpaceFaceAll & = & \Big\{ \phi \in \V^1 \; | \; \fstandard{i}(\xi_1,\xi_2,\xi_3) = \sum_{j_1=0}^{n-1} \sum_{j_2=0}^{n-1} \sum_{j_3=0}^{n-1} 
 \coeff_{j_1,j_2,j_3}^{(i)} \UN{\f{p}}{\f{r}}{j_1,j_2,j_3}(\xi_1,\xi_2,\xi_3), 
 \nonumber \\ 
 & & \coeff_{j_1,j_2,j_3}^{(i)}\in \R \mbox{ for } (j_1,j_2,j_3) \in \I^3 \setminus (\I \setminus \{0,1,n-2,n-1 \})^3, \nonumber \\
 & & \coeff_{j_1,j_2,j_3}^{(i)} =0 \mbox{ for } (j_1,j_2,j_3) \in (\I \setminus \{0,1,n-2,n-1 \})^3, \; i \in \indexPatch \Big\}, \nonumber 
\end{eqnarray}
respectively. Note that the subspaces~$\SpacePatch{i}$, $i \in \indexPatch$, are simply equal to
\begin{equation} \label{eq:patch_space}
 \SpacePatch{i} = \Span \left\{ \phiPatch{i}{j_1}{j_2}{j_3}  \;|\; j_1,j_2,j_3 \in \I \setminus \{0,1,n-2,n-1 \} \right\}.
\end{equation}
As already mentioned in Section~\ref{subsec:preliminaries}, the functions~$\phiPatch{i}{j_1}{j_2}{j_3}$, $ j_1,j_2,j_3 
\in \I \setminus \{0,1,n-2,n-1 \}$, are trivially $C^1$ on $\overline{\domain}$ and therefore belong to the space~$\V^1$, since they have vanishing 
values and gradients at all faces~$\overline{\Gamma^{(\ell)}}$, $\ell \in \indexFace$, i.e. $\phiPatch{i}{j_1}{j_2}{j_3}(\overline{\face^{(\ell)}})=0$ 
and $\nabla \phiPatch{i}{j_1}{j_2}{j_3}(\overline{\face^{(\ell)}})=\f{0}$.

To study the space~$\SpaceFaceAll$ in more detail, we need some additional notations and definitions. Let $\phiFaceAll{i}:\overline{\domain} 
\rightarrow \R$, $i \in \indexFace$, be the isogeometric function defined as the linear combination of all functions~$\phiFace{i}{j_1}{j_2}{j_3}$, i.e.
\[
 \phiFaceAll{i}(\f{x})= \sum_{j_1=0}^{1} \sum_{j_2=0}^{n_{j_1}-1} \sum_{j_3=0}^{n_{j_1}-1} \coeffFace{i}{j_1}{j_2}{j_3} 
 \phiFace{i}{j_1}{j_2}{j_3}(\f{x}), \quad \coeffFace{i}{j_1}{j_2}{j_3} \in \R,
\]
for the case of an inner face~$\face^{(i)}$, $i \in \indexFaceInner$, and
\[
 \phiFaceAll{i}(\f{x})= \sum_{j_1=0}^{1} \sum_{j_2=0}^{n-1} \sum_{j_3=0}^{n-1} \coeffFace{i}{j_1}{j_2}{j_3} 
 \phiFace{i}{j_1}{j_2}{j_3}(\f{x}), \quad \coeffFace{i}{j_1}{j_2}{j_3} \in \R,
\]
for the case of a boundary face~$\face^{(i)}$, $i \in \indexFaceBoundary$. 

\begin{figure}
\begin{center}
\resizebox{0.45\textwidth}{!}{
 \begin{tikzpicture}
  \coordinate(A) at (-0.5,0);  \coordinate(C) at (1.4,0.5); \coordinate(G) at (1.3,4); \coordinate(E) at (0.2,2.7);
  \coordinate(B) at (3,-0.9); \coordinate(D) at (4,0.4); \coordinate(H) at (4.3,3.8); \coordinate(F) at (2.9,3);
  
   \fill[gray!10] (C) -- (G) -- (G) -- (E) -- (E) -- (A) -- (A) -- (C);  
   \fill[gray!10] (A) -- (B) -- (B) -- (F) -- (F) -- (E) -- (E) -- (A); 
  
  \draw[dashed] (C) -- (A); \draw[dashed] (C) -- (G);  \draw (G) -- (E); \draw (E) -- (A);

  \draw (B) -- (A); \draw (B) -- (D);  \draw[dashed] (D) -- (C); \draw (D) -- (H); \draw (G) -- (H); \draw (F) -- (H); \draw (F) -- (E); 
  \draw (F) -- (B);

  \fill (A) circle (1pt); \fill (B) circle (1pt); \fill (C) circle (1pt); \fill (D) circle (1pt);
  \fill (E) circle (1pt); \fill (F) circle (1pt); \fill (G) circle (1pt); \fill (H) circle (1pt); 

  \draw[->] (-0.05,0.05) -- (1,0.33);
  \draw[->] (0,-0.05) -- (1,-0.3);
  \draw[->] (-0.37,0.15) -- (-0.1,1.2);

  \node at (1.1,0.2) {\scriptsize $\xi_2$};
  \node at (0.1,1) {\scriptsize $\xi_3$};
  \node at (0.9,-0.1) {\scriptsize $\xi_1$};
  \node[below] at (A) {\scriptsize $\vertex^{(i)}$}; %
  \node at (2.2,1.8) {\scriptsize $\patch^{(i_m)}$};
  \node at (-0.5,1.5) {\scriptsize $\edge^{(i)}$};
  \node at (0.75,1.4) {\scriptsize $\face^{(\LeftFace{i}{i_m})}$};
  \node at (2.1,1.0) {\scriptsize $\face^{(\RightFace{i}{i_m})}$};
  \end{tikzpicture}
 }
\end{center}
\caption{The parameterization~$\map^{(i_m)}$ of the patch~$\overline{\patch^{(i_m)}}$ is called to be in standard form with respect to the 
edge~$\edge^{(i)}$ or with respect to the vertex~$\vertex^{(i)}$. For the edge~$\edge^{(i)}$ of the patch~$\overline{\patch^{(i_m)}}$, the 
two neighboring faces are denoted by $\face^{(\LeftFace{i}{i_m})}$ and $\face^{(\RightFace{i}{i_m})}$ as 
shown in the figure.} \label{fig:standard_form_edge_functions}
\end{figure}

For a patch~$\patch^{(i_m)}$, $i_m \in \indexPatch$, we say that the associated geometry mapping~$\map^{(i_m)}$ is given in 
\textit{standard form} with respect to an edge~$\edge^{(i)}$, $i \in \indexEdge$, $\edge^{(i)} \subset \overline{\patch^{(i_m)}}$, or with respect to 
a vertex~$\vertex^{(i)}$, $i \in \indexVertex$, $\vertex^{(i)} \in \overline{\patch^{(i_m)}}$, when the geometry mapping~$\map^{(i_m)}$ is 
parameterized as shown in Fig.~\ref{fig:standard_form_edge_functions}. Note that the geometry mapping~$\map^{(i_m)}$ can always be reparameterized 
(if necessary) to be in standard form with respect to an edge or a vertex.

We denote for each edge~$\edge^{(i)}$, $i \in \indexEdge$, by $\WhichPatchesE{i}$ the set of the indices~$i_m$ of those patches~$\patch^{(i_m)}$, 
$i_m \in \indexPatch$, whose closure contains the edge~$\edge^{(i)}$, i.e. $\edge^{(i)} \subset \overline{\patch^{(i_m)}}$, and define 
for each vertex~$\vertex^{(i)}$, $i \in \indexVertex$, the set~$\WhichPatchesV{i}$, which collects the indices~$i_m$, $i_m \in \indexPatch$, 
such that $\vertex^{(i)} \in \overline{\patch^{(i_m)}}$, and the set~$\WhichEdgesV{i}$, which collects the indices~$i_s$ 
of the three edges~$\edge^{(i_s)}$, $i_s \in \indexEdge$, such that $\vertex^{(i)} \in \overline{\edge^{(i_s)}}$. 
For each edge~$\edge^{(i)}$, $i \in \indexEdge$, 
assuming without loss of generality that the geometry mappings~$\map^{(i_m)}$ of the patches~$\patch^{(i_m)}$, $i_m \in \WhichPatchesE{i}$, are 
in standard form with respect to the edge~$\edge^{(i)}$, cf.
Fig.~\ref{fig:standard_form_edge_functions}, let $\phiOneEdgeAll{i}: \overline{\domain} \rightarrow \R$ be 
the isogeometric function defined as the linear combination of standard isogeometric functions~$\phiPatch{i_m}{j_1}{j_2}{j_3}$ in the vicinity of the 
edge~$\edge^{(i)}$, more precisely
\[
 \phiOneEdgeAll{i}(\f{x}) = \sum_{i_m \in \WhichPatchesE{i}} \sum_{j_1=0}^{1} \sum_{j_2=0}^{1} \sum_{j_3=0}^{n-1} 
 \coeffPatchEdgeSt{i}{j_1}{j_2}{j_3}{i_m} \phiPatch{i_m}{j_1}{j_2}{j_3}(\f{x}),
\]
with $\coeffPatchEdgeSt{i}{j_1}{j_2}{j_3}{i_m} \in \R$. Similarly, we define for each vertex~$\vertex^{(i)}$, $i \in \indexVertex$, now assuming 
without loss of generality that the geometry mappings~$\map^{(i_m)}$ of the patches~$\patch^{(i_m)}$, $i_m \in \WhichPatchesV{i}$, are 
in standard form with respect to the vertex~$\vertex^{(i)}$, cf. Fig.~\ref{fig:standard_form_edge_functions}, the isogeometric function given by
\[
 \phiOneVertexAll{i}(\f{x}) = \sum_{i_m \in \WhichPatchesV{i}} \sum_{j_1=0}^{1} \sum_{j_2=0}^{1} \sum_{j_3=0}^{1} 
 \coeffPatchVertex{i}{j_1}{j_2}{j_3}{i_m} \phiPatch{i_m}{j_1}{j_2}{j_3}(\f{x}),
\]
with $\coeffPatchVertex{i}{j_1}{j_2}{j_3}{i_m} \in \R$, which is the linear combination of standard isogeometric 
functions~$\phiPatch{i_m}{j_1}{j_2}{j_3}$ in the neighborhood of the vertex~$\vertex^{(i)}$. We further denote by~$\coeffAllFace$ the vector of all 
coefficients~$\coeffFace{i}{j_1}{j_2}{j_3}$, $\coeffPatchEdgeSt{i}{j_1}{j_2}{j_3}{i_m}$ and $\coeffPatchVertex{i}{j_1}{j_2}{j_3}{i_m}$ of the 
isogeometric functions $\phiFaceAll{i}$, $i \in \indexFace$, $\phiOneEdgeAll{i}$, $i \in \indexEdge$, and $\phiOneVertexAll{i}$, $i \in \indexVertex$, 
respectively. 

For each edge~$\edge^{(i)}$, $i \in \indexEdge$, and patch~$\patch^{(i_m)}$, $i_m \in \WhichPatchesE{i}$, assuming that the associated geometry mapping~$\map^{(i_m)}$ is given in standard form with respect to the edge~$\edge^{(i)}$, 
we define by $\LeftFace{i}{i_m}$ the index $\ell \in \indexFace$ for which 
\[
 \face^{(\ell)} \subset \map^{(i_m)}(\{ 0\}  \times [0,1] \times [0,1]),
\]
and similarly by $\RightFace{i}{i_m}$ the index $\ell' \in \indexFace$ for which
\[
 \face^{(\ell')} \subset \map^{(i_m)}([0,1] \times \{ 0\} \times [0,1]),
\]
cf. Fig.~\ref{fig:standard_form_edge_functions}.

Recall that for any face~$\face^{(i)}$, $i \in \indexFace$, all functions $\phiFace{i}{j_1}{j_2}{j_3}$, or equivalently all possible variations 
of $\phiFaceAll{i}$, span the space of those isogeometric functions~$\phi \in \V$, which are $C^1$ at the face~$\face^{(i)}$ and which possess a 
support limited to the vicinity of the face~$\face^{(i)}$, or more precisely, which have a support with respect to the standard isogeometric 
spline functions~$\phiPatch{\ell}{j_1}{j_2}{j_3}$ with non-vanishing values or non-vanishing gradients at the face~$\face^{(i)}$, cf. 
Section~\ref{subsec:preliminaries} and \ref{subsec:two_patch_case}.

Therefore, the space~$\SpaceFaceAll$ is equal to the space of all functions that are formed by a linear combination of functions~$\phiFaceAll{i}$, 
$i \in \indexFace$, which are compatible (i.e. coincide) at their possible common supports in the neighborhood of the edges~$\edge^{(\ell)}$, 
$\ell \in \indexEdge$, and vertices~$\vertex^{(\ell)}$, $\ell \in \indexVertex$, of the multi-patch volume~$\overline{\domain}$, and by subtracting 
those standard isogeometric spline functions~$\phiPatch{\ell'}{j_1}{j_2}{j_3}$ which have been added to often. By studying the possible 
common supports of the functions~$\phiFaceAll{i}$, $i \in \indexFace$, at the edges and vertices of the multi-patch volume~$\overline{\domain}$, we
observe that such a function possesses the form
\begin{equation} \label{eq:functionComplexAll}
 \sum_{i \in \indexFace} \phiFaceAll{i} - \sum_{i \in \indexEdge} \phiOneEdgeAll{i} - 2\sum_{i \in \indexVertex} \phiOneVertexAll{i},
\end{equation}
where the coefficients $\coeffAllFace$ have to satisfy 
\begin{equation} \label{eq:facesequal}
 \partial_{1}^{\ell_1} \partial_{2}^{\ell_2} \big(\phiFaceAll{\LeftFace{i}{i_m}} \circ \map^{(i_m)} \big)(0,0,\xi) = 
 \partial_{1}^{\ell_1} \partial_{2}^{\ell_2} \big(\phiFaceAll{\RightFace{i}{i_m}} \circ \map^{(i_m)} \big)(0,0,\xi),
 \mbox{ } 0 \leq \ell_1,\ell_2, \leq 1, \, \xi \in [0,1],
\end{equation}
and 
\begin{equation} \label{eq:facesedgesequal}
 \partial_{1}^{\ell_1} \partial_{2}^{\ell_2} \big(\phiFaceAll{\LeftFace{i}{i_m}} \circ \map^{(i_m)} \big)(0,0,\xi) = 
 \partial_{1}^{\ell_1} \partial_{2}^{\ell_2} \big(\phiOneEdgeAll{i} \circ \map^{(i_m)} \big)(0,0,\xi),
 \mbox{ } 0 \leq \ell_1,\ell_2, \leq 1, \, \xi \in [0,1],
\end{equation}
for each edge~$\edge^{(i)}$, $i \in \indexEdge$, and patch~$\patch^{(i_m)}$, $i_m \in 
\WhichPatchesE{i}$, and  
\begin{equation} \label{eq:edgesverticesequal}
 \partial_{1}^{\ell_1} \partial_{2}^{\ell_2} \partial_{3}^{\ell_3}  \big(\phiOneVertexAll{i} \circ \map^{(i_m)} \big)(0,0,0) = 
 \partial_{1}^{\ell_1} \partial_{2}^{\ell_2} \partial_{3}^{\ell_3}  \big(\phiOneEdgeAll{{ i_s}} \circ \map^{(i_m)} \big)(0,0,0),
 \mbox{ }0 \leq \ell_1,\ell_2,\ell_3 \leq 1,
\end{equation}
for each vertex~$\vertex^{(i)}$, $i \in \indexVertex$, patch~$\patch^{(i_m)}$, $i_m \in \WhichPatchesV{i}$, and edge~$\edge^{({ i_s})}$, ${i_s} \in 
\WhichEdgesV{i}$, assuming that in each case the geometry mapping~$\map^{(i_m)}$ is given in standard form with respect to the corresponding 
edge~$\edge^{(i)}$ or vertex~$\vertex^{(i)}$, cf. Fig.~\ref{fig:standard_form_edge_functions}. While condition~\eqref{eq:facesequal} guarantees 
that the single functions~$\phiFaceAll{i}$ are compatible in the vicinity of the edges and vertices of the multi-patch domain~$\overline{\domain}$, 
conditions~\eqref{eq:facesedgesequal} and \eqref{eq:edgesverticesequal} ensure that the correct multiples of the standard isogeometric spline 
functions~$\phiPatch{\ell'}{j_1}{j_2}{j_3}$ are subtracted, which all together implies that the resulting function~\eqref{eq:functionComplexAll} is 
$C^1$ on $\overline{\domain}$ and therefore belongs to the space~$\V^1$.

Equations~\eqref{eq:facesequal} and \eqref{eq:facesedgesequal} are equivalent to
\begin{equation} \label{eq:facesequalNew}
 \partial_{1}^{\ell_1} \partial_{2}^{\ell_2} \big(\phiFaceAll{\LeftFace{i}{i_m}} \circ \map^{(i_m)} \big)(0,0,\zeta_j) = 
 \partial_{1}^{\ell_1} \partial_{2}^{\ell_2} \big(\phiFaceAll{\RightFace{i}{i_m}} \circ \map^{(i_m)} \big)(0,0,\zeta_j), 
 \mbox{ } 0 \leq \ell_1,\ell_2, \leq 1, \, j \in \I,
\end{equation}
and
\begin{equation} \label{eq:facesedgesequalNew}
  \partial_{1}^{\ell_1} \partial_{2}^{\ell_2} \big(\phiFaceAll{\LeftFace{i}{i_m}} \circ \map^{(i_m)} \big)(0,0,\zeta_j) = 
 \partial_{1}^{\ell_1} \partial_{2}^{\ell_2} \big(\phiOneEdgeAll{i} \circ \map^{(i_m)} \big)(0,0,\zeta_j), 
 \mbox{ } 0 \leq \ell_1,\ell_2, \leq 1, \, j \in \I,
\end{equation}
respectively, where $\zeta_j$, $j \in \{0,1,\ldots,n-1\}$, are the Greville abscissae with respect to the univariate spline space~$\US{p}{r}{h}$. Then, 
all equations~\eqref{eq:edgesverticesequal}, \eqref{eq:facesequalNew} and \eqref{eq:facesedgesequalNew} build a homogeneous system of linear equations
\begin{equation} \label{eq:homogoneous_system_Large}
 \TMatrixFace \coeffAllFace = \f{0},
\end{equation}
for the coefficients~$\coeffAllFace$, and any choice of the coefficient vector~$\coeffAllFace$, which fulfills the linear 
system~\eqref{eq:homogoneous_system_Large}, specifies an isogeometric function~\eqref{eq:functionComplexAll} which belongs to the space~$\V^1$. 
This allows us to describe the space~$\SpaceFaceAll$ as 
\[
 \SpaceFaceAll = \left\{ \sum_{i \in \indexFace} \phiFaceAll{i} - \sum_{i \in \indexEdge} \phiOneEdgeAll{i} - 2\sum_{i \in \indexVertex} \phiOneVertexAll{i}
 \; | \; \coeffAllFace \in \R^{|\coeffAllFace|}, \; \TMatrixFace \coeffAllFace = \f{0} \right\} .
\]

Note that the selected strategy to generate $C^1$ isogeometric spline functions across the patch faces is inspired by the construction of $C^1$ and 
$C^2$ isogeometric spline functions in the vicinity of a vertex of a planar multi-patch domain presented in~\cite{KaSaTa19a,KaSaTa19b} and 
\cite{KaVi19a,KaVi20}, respectively. There, $C^1$/$C^2$ isogeometric spline functions are generated in the neighborhood of a vertex as the sum of 
compatible edge functions for the single edges and by subtracting those standard isogeometric spline functions which have been added twice. 
In \cite{KaVi20b}, this approach has been generalized to the case of $C^s$ isogeometric spline functions over planar 
multi-patch parameterizations for an abritrary $s \geq 1$.

Now, analyzing the conditions~\eqref{eq:edgesverticesequal}, \eqref{eq:facesequalNew} and \eqref{eq:facesedgesequalNew}, we observe that 
a coefficient $\coeffFace{i}{j_1}{j_2}{j_3}$ is not involved in these equations if $j_1=0,1$, $j_2,j_3 \in \I_{j_1} \setminus 
\{0,\ldots,2-j_1,n_{j_1}-3+j_1,\ldots,n_{j_1}-1 \}$, for the case of an inner face~$\face^{(i)}$, $i \in \indexFaceInner$, and if $j_1=0,1$, 
$j_2,j_3 \in \I \setminus \{0,1,n-2,n-1 \}$, for the case of a boundary face $\face^{(i)}$, $i \in \indexFaceBoundary$, which simplifies 
the homogeneous linear system~\eqref{eq:homogoneous_system_Large} to the system  
\begin{equation} \label{eq:homogoneous_system}
 \TMatrixEdge \coeffAllEdge = \f{0},
\end{equation}
where~$\coeffAllEdge$ is the vector of all coefficients~$\coeffAllFace$ which are involved in the equations \eqref{eq:edgesverticesequal}, 
\eqref{eq:facesequalNew} and \eqref{eq:facesedgesequalNew}. This is a direct consequence of the fact that the function~$\phiFace{i}{j_1}{j_2}{j_3}$ 
for $j_1=0,1$, $j_2,j_3 \in \I_{j_1} \setminus \{0,\ldots,2-j_1,n_{j_1}-3+j_1,\ldots,n_{j_1}-1 \}$, $i \in \indexFaceInner$, and for $j_1=0,1$, 
$j_2,j_3 \in \I \setminus \{0,1,n-2,n-1 \}$, $i \in \indexFaceBoundary$, has vanishing values and gradients at all other 
faces~$\overline{\face^{(\ell)}}$, $\ell \in \indexFace \setminus \{ i \}$, i.e. $\phiFace{i}{j_1}{j_2}{j_3}(\overline{\face^{(\ell)}})=0$ and 
$\nabla \phiFace{i}{j_1}{j_2}{j_3}(\overline{\face^{(\ell)}})=\f{0}$, but which also further implies that the corresponding 
function~$\phiFace{i}{j_1}{j_2}{j_3}$ is $C^1$ on the entire multi-patch volume~$\overline{\domain}$, and therefore belongs to the space~$\V^1$, see 
also Section~\ref{subsec:preliminaries}. Therefore, the space~$\SpaceFaceAll$ can be decomposed into the direct sum
\[
 \SpaceFaceAll = \left( \bigoplus_{i \in \indexFace} \SpaceFace{i} \right) \oplus \SpaceEdgeAll,
\]
with the single subspaces~$\SpaceFace{i}$ given by
\begin{equation} \label{eq:facespaceinner}
 \SpaceFace{i} = \Span \{  \phiFace{i}{j_1}{j_2}{j_3}  \,|\, j_1=0,1, \, j_2,j_3 \in \I_{j_1} \setminus 
 \{0,\ldots,2-j_1,n_{j_1}-3+j_1,\ldots,n_{j_1}-1 \} \},
 \end{equation}
for an inner face~$\face^{(i)}$, $i \in \indexFaceInner$, and by
\begin{equation} \label{eq:facespaceboundary}
\SpaceFace{i} = \Span \left\{ \phiFace{i}{j_1}{j_2}{j_3}  \;|\; j_1 =0,1, \, j_2,j_3 \in \I \setminus \{0,1,n-2,n-1 \} \right\},
\end{equation}
for a boundary face~$\face^{(i)}$, $i \in \indexFaceBoundary$, and with the subspace~$\SpaceEdgeAll$ which is equal to 
\begin{equation} \label{eq:edgespaceAll}
 \SpaceEdgeAll = \left\{ \sum_{i \in \indexFace} \phiFaceAllRes{i} - \sum_{i \in \indexEdge} \phiOneEdgeAll{i} - 2\sum_{i \in \indexVertex} 
 \phiOneVertexAll{i} \; | \; \coeffAllEdge \in \R^{|\coeffAllEdge|}, \; \TMatrixEdge \coeffAllEdge = \f{0} \right\},
\end{equation}
where $\phiFaceAllRes{i}$ is the function~$\phiFaceAll{i}$ for which the coefficients~$\coeffFace{i}{j_1}{j_2}{j_3}$ are set to zero if $j_1=0,1$, 
$j_2,j_3 \in \I_{j_1} \setminus \{0,\ldots,2-j_1,n_{j_1}-3+j_1,\ldots,n_{j_1}-1 \}$ for $i \in \indexFaceInner$, and if $j_1=0,1$, 
$j_2,j_3 \in \I \setminus \{0,1,n-2,n-1 \}$, for $i \in \indexFaceBoundary$. 

Let $n_{\TMatrixEdge}$ be the dimension of the kernel of the matrix~$\TMatrixEdge$ in the homogeneous system of linear 
equations~\eqref{eq:homogoneous_system}, i.e. $n_{\TMatrixEdge} = \dim \ker (\TMatrixEdge)$. Each basis of the $\ker (\TMatrixEdge)$ determines 
$n_{\TMatrixEdge}$ linearly independent $C^1$ isogeometric functions, which we will denote by $\phiEdgeAllSingle{j}$, 
$j \in \{0,1,\ldots,n_{\TMatrixEdge}-1 \}$, and which form a basis of the space~$\SpaceEdgeAll$, i.e.
\[
 \SpaceEdgeAll = \Span \left\{ \phiEdgeAllSingle{j} \;|\; j \in \{0,1,\ldots,n_{\TMatrixEdge} -1\} \right\}.
\]
A possible strategy to compute a basis for the $\ker (\TMatrixEdge)$ is to use the concept of minimal determining sets (cf.~\cite{LaSch07}) for the 
coefficients~$\coeffAllEdge$. An example of such a minimal determining set algorithm, which can be directly applied to our configuration, 
{is described in~\cite{KaVi17a}}. While the functions~$\phiPatch{i}{j_1}{j_2}{j_3}$ and $\phiFace{i}{j_1}{j_2}{j_3}$ of the spline spaces 
$\SpacePatch{i}$, $i \in \indexPatch$, and $\SpaceFace{i}$, $i \in \indexFace$, are locally supported by their definition and construction, this 
is in general not true for the resulting functions $\phiEdgeAllSingle{j}$, $j \in \{0,1,\ldots,n_{\TMatrixEdge}-1\}$, which can be in the worst case 
even supported over all or over most edges of the multi-patch volume~$\overline{\domain}$. However, the method allows a significant reduction of 
the support of the generated functions~$\phiEdgeAllSingle{j}$, $j \in \{0,1,\ldots,n_{\TMatrixEdge}-1\}$, by an appropriate separation of the linear 
system~\eqref{eq:homogoneous_system} and by a careful preselection of some coefficients of $\coeffAllEdge$. E.g. in~\cite{KaVi17b}, the minimal 
determining set algorithm~\cite{KaVi17a} was used and adapted to generate $C^2$ functions over edges of planar bilinearly parameterized multi-patch 
domains, which are just supported over one edge or over the edges containing one particular vertex.

Summarized, we obtain:
\begin{thm} \label{thm:basis}
The $C^1$ isogeometric spline space~$\V^1$ over the trilinear multi-patch volume~$\overline{\domain}$ can be decomposed into the direct sum
\begin{equation} \label{eq:direct_sum_GeneralMultiPatch}
 \V^1= \left( \bigoplus_{i \in \indexPatch} \SpacePatch{i} \right) \oplus 
 \left( \bigoplus_{i \in \indexFace} \SpaceFace{i} \right) \oplus \SpaceEdgeAll,
\end{equation}
where the single subspaces~$\SpacePatch{i}$, $i \in \indexPatch$, $\SpaceFace{i}$, $i \in \indexFaceInner$, $\SpaceFace{i}$, $i \in \indexFaceBoundary$, 
and $\SpaceEdgeAll$ are given by \eqref{eq:patch_space}, \eqref{eq:facespaceinner}, \eqref{eq:facespaceboundary} and \eqref{eq:edgespaceAll}, 
respectively. Moreover, the functions $\phiPatch{i}{j_1}{j_2}{j_3}$, $j_1,j_2,j_3 \in \I \setminus \{0,1,n-2,n-1 \}$, $i \in \indexPatch$, $\phiFace{i}{j_1}{j_2}{j_3}$, 
$j_1=0,1$, $j_2,j_3 \in \I_{j_1} \setminus \{0,\ldots,2-j_1,n_{j_1}-3+j_1,\ldots,n_{j_1}-1 \}$, $i \in \indexFaceInner$, 
$\phiFace{i}{j_1}{j_2}{j_3}$, $j_1 =0,1$, $j_2,j_3 \in \I \setminus \{0,1,n-2,n-1 \}$, $i \in \indexFaceBoundary$, and 
$\phiEdgeAllSingle{j}$, $j \in \{0,\ldots,n_{T}-1 \}$ of the spaces $\SpacePatch{i}$, $i \in \indexPatch$, $\SpaceFace{i}$, $i \in \indexFaceInner$, 
$\SpaceFace{i}$, $i \in \indexFaceBoundary$,  and $\SpaceEdgeAll$, respectively, form a basis of the $C^1$ isogeometric spline space~$\V^1$. 
\end{thm}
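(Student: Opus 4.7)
The plan is to verify in three stages: that each listed subspace is contained in $\V^1$; that the sum of the three subspaces is direct and the stated generating sets are bases; and that every $\phi \in \V^1$ lies in the sum. For the first stage, each patch function $\phiPatch{i}{j_1}{j_2}{j_3}$ with $j_1,j_2,j_3 \in \I \setminus \{0,1,n-2,n-1\}$ is trivially $C^1$ on $\overline{\domain}$, since its pullback vanishes together with all first partial derivatives on $\partial [0,1]^3$, so the function and its gradient vanish on every face $\overline{\face^{(\ell)}}$. For each face function $\phiFace{i}{j_1}{j_2}{j_3}$ with the prescribed restrictions on $j_2,j_3$, $C^1$ continuity across $\face^{(i)}$ follows from the two-patch Taylor-expansion construction \eqref{eq:Taylor_f0}--\eqref{eq:Taylor_f1} recalled in Section~\ref{subsec:preliminaries}, while the index restrictions force vanishing values and gradients at every other face. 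Every element of $\SpaceEdgeAll$ lies in $\V^1$ because it is built in \eqref{eq:functionComplexAll}--\eqref{eq:edgespaceAll} precisely to satisfy the compatibility equations \eqref{eq:facesequalNew}, \eqref{eq:facesedgesequalNew}, \eqref{eq:edgesverticesequal}, which enforce agreement of the face pieces up to first order along each edge and correct for the multiplicities of the standard B-spline contributions subtracted at edges and vertices.

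Linear independence within $\SpacePatch{i}$ and within each $\SpaceFace{i}$ follows from that of the underlying tensor-product B-splines on each patch, together with the fact that the face functions inject into distinct near-face B-spline coefficient positions, as visible from the explicit formulas in Section~\ref{subsec:preliminaries}. The sum $\bigoplus_i \SpacePatch{i} \oplus \bigoplus_i \SpaceFace{i} \oplus \SpaceEdgeAll$ is then direct because the three families contribute to disjoint collections of B-spline coefficient positions: the patch functions use only deep-interior positions with all three indices in $\{2,\dots,n-3\}$; each $\SpaceFace{i}$ uses positions touching $\face^{(i)}$ with face-parallel indices away from $\{0,1,n-2,n-1\}$; and $\SpaceEdgeAll$ is supported on positions touching an edge or vertex. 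The functions $\phiEdgeAllSingle{j}$ form a basis of $\SpaceEdgeAll$ because they arise from a basis of $\ker(\TMatrixEdge)$ under the linear map $\coeffAllEdge \mapsto \sum_i \phiFaceAllRes{i} - \sum_i \phiOneEdgeAll{i} - 2\sum_i \phiOneVertexAll{i}$, whose injectivity follows from the same coefficient bookkeeping.

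For spanning, given $\phi \in \V^1$ with patchwise expansions $f^{(i)} = \sum_{j_1,j_2,j_3} a^{(i)}_{j_1,j_2,j_3} \UN{\f{p}}{\f{r}}{j_1,j_2,j_3}$, I would perform a three-step peeling argument. First subtract
\begin{equation*}
 \phi_P := \sum_{i \in \indexPatch} \sum_{j_1,j_2,j_3 \in \{2,\dots,n-3\}} a^{(i)}_{j_1,j_2,j_3} \phiPatch{i}{j_1}{j_2}{j_3} \in \bigoplus_i \SpacePatch{i};
\end{equation*}
the residue $\phi - \phi_P$ remains in $\V^1$ and has vanishing B-spline coefficients on the deep interior of every patch. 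Next, for each face $\face^{(i)}$, extract the face-interior coefficients of $\phi - \phi_P$ and subtract the corresponding element of $\SpaceFace{i}$, exhausting all coefficients whose multi-indices lie near exactly one face but away from edges. What remains has non-zero coefficients only at positions clustered near edges and vertices, and by the construction of $\phiFaceAllRes{i}$, $\phiOneEdgeAll{i}$ and $\phiOneVertexAll{i}$ this final residue admits a representation of the form \eqref{eq:functionComplexAll}; the surviving $C^1$ constraints then become precisely the system $\TMatrixEdge \coeffAllEdge = \f{0}$, so the residue belongs to $\SpaceEdgeAll$.

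The main obstacle is this last step: one must show that the near-edge/vertex residue really does admit the representation \eqref{eq:functionComplexAll} with a unique coefficient vector, and that the $C^1$ constraints on such a residue are exactly \eqref{eq:facesequalNew}--\eqref{eq:facesedgesequalNew} and \eqref{eq:edgesverticesequal}. This calls for careful combinatorial bookkeeping of which B-spline coefficient positions in each patch $\patch^{(i_m)}$ are hit by each of $\phiFaceAllRes{\ell}$, $\phiOneEdgeAll{\ell}$ and $\phiOneVertexAll{\ell}$, together with the inclusion-exclusion corrections (multiplicity $-1$ on edges, $-2$ on vertices) baked into \eqref{eq:functionComplexAll}. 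Once this coverage of every near-edge and near-vertex position is verified to be exact, both the directness of the sum and the spanning property follow at once.
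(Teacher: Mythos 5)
Your proposal takes essentially the same route as the paper: the paper's proof is a one-sentence appeal to the construction of Section~\ref{subsec:general_framework}, which is exactly your peeling decomposition into deep-interior patch functions, face functions with restricted indices, and the edge space realized as the image of $\ker(\TMatrixEdge)$ via the combination~\eqref{eq:functionComplexAll}. The final bookkeeping step you flag as the main obstacle is treated at the same informal level in the paper (``by studying the possible common supports \dots we observe that such a function possesses the form~\eqref{eq:functionComplexAll}''), so your argument matches the paper's in both structure and level of rigor.
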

\begin{proof}
The equivalence~\eqref{eq:direct_sum_GeneralMultiPatch} as well as the claim that the functions $\phiPatch{i}{j_1}{j_2}{j_3}$, 
$\phiFace{i}{j_1}{j_2}{j_3}$ and $\phiEdgeAllSingle{j}$ of the spline spaces $\SpacePatch{i}$, $i \in \indexPatch$, $\SpaceFace{i}$, $i \in \indexFace$, 
and $\SpaceEdgeAll$ form a basis of the $C^1$ space~$\V^1$ directly follow from the construction of the space~$\V^1$ presented above.
\end{proof}

Since the $C^1$ isogeometric spline space~$\V^1$ is the direct sum~\eqref{eq:direct_sum_GeneralMultiPatch}, the dimension of $\V^1$ is equal to
\[
 \dim \V^1 = \sum_{i \in \indexPatch} \dim \SpacePatch{i}  +  \sum_{i \in \indexFace} \dim \SpaceFace{i}  + \dim \SpaceEdgeAll .
\]
While the dimensions of the spaces $\SpacePatch{i}$, $i \in \indexPatch$, and of the spaces~$\SpaceFace{i}$, $i \in \indexFace$, just depend on 
the degree~$p$, the regularity~$r$ and the number of spline elements, i.e. $k+1$, of the underlying spline space~$\US{p}{r}{h}$, and are simply given as
\[
 \dim \SpacePatch{i} = (|\I|-4)^3, 
\]
and 
\[
 \dim \SpaceFace{i} = \begin{cases}
  2 (|\I|-4)^2  & \mbox{if }i \in \indexFaceBoundary, \\
  (|\I_0|-6)^2 + (|\I_1|-4)^2  & \mbox{if }i \in \indexFaceInner,
 \end{cases}
\]
the dimension of the space~$\SpaceEdgeAll$, that is the number $n_{\TMatrixEdge}$, also depends on the number of patches, faces, edges and vertices of 
the multi-patch volume~$\overline{\domain}$, and further depends on the valencies
of the single edges and vertices, and on the shapes of the individual trilinearly parameterized patches. A detailed study of the dimension of 
$\SpaceEdgeAll$ is beyond the scope of the paper and is the topic of possible future research. However, the numerical investigation of its dimension for a specific subclass of trilinearly parameterized multi-patch volumes will be presented in Section~\ref{subsec:dimension}.

\begin{rem} \label{rem:modified_construction}
The presented construction of the $C^1$ isogeometric spline space over the trilinear multi-patch volume~$\overline{\domain}$ is a general framework for 
the uniform design of the space~$\V^1$ and of a basis of the space for any possible configuration of the trilinear multi-patch 
volume~$\overline{\domain}$.  But clearly, the selected splitting of the space~$\V^1$ into the direct sum of simpler subspaces is not the only possible 
one. E.g., in case of a boundary face or in case of a boundary edge with a patch valency~$1$, the corresponding patch space~$\SpacePatch{i}$, 
$i \in \indexPatch$, could be trivially extended to the boundary face or to the boundary edge to obtain a slightly modified patch 
space~$\SpaceFaceMod{i}$. Similarly, in case of a boundary edge with patch valency~$2$, the corresponding face space~$\SpaceFace{i}$, 
$i \in \indexFaceInner$, could be trivially enlarged to the boundary edge to get a slightly adapted face space~$\SpaceFaceMod{i}$. The described 
steps would then also lead to a modified edge space~$\SpaceEdgeAllMod$, which would be smaller and simpler such as for the particular subclass of 
trilinear multi-patch volumes~$\overline{\domain}$ considered in Section~\ref{subsec:example_one_inner_edge}, or which would even vanish like in the two patch 
case in Section~\ref{subsec:two_patch_case}.
\end{rem}

\section{A specific subclass of trilinear multi-patch volumes} \label{sec:specific-class}

In this section, we describe for a particular subclass of trilinear multi-patch volumes the above presented method for the design of the 
$C^1$ isogeometric spline space~$\V^1$ and of an associated basis in more detail. In addition, we numerically compute the dimension of the resulting 
$C^1$ spline space~$\V^1$ and perform $ L^2$ approximation to numerically investigate the approximation power of this space.

\subsection{The subclass of trilinear multi-patch volumes with one inner edge} \label{subsec:example_one_inner_edge}

In the following, let us consider a particular subclass of trilinear multi-patch volumes~$\overline{\domain}$, denoted by $\classVolume$, 
where each of the multi-patch volumes is the union of $\numPatches$ patches~$\overline{\patch^{(i)}}$, $i \in \indexPatch$, with 
$\indexPatch = \{ 0,1,\ldots, \nu-1\}$ and $\numPatches \geq 3$, that is $\overline{\domain} = \cup_{i=0}^{\numPatches-1} \overline{\patch^{(i)}}$, 
possesses $\numPatches$ inner faces~$\face^{(i)}$, $i \in \indexFaceInner$, with $\indexFaceInner = \{0,1,\ldots,\numPatches-1 \}$, and has exactly one 
inner edge labeled by $\edge^{(0)}$. For each example of such a trilinear multi-patch volume~$\overline{\domain}$, we assume that the inner 
faces~$\overline{\face^{(i)}}$, $i \in \{0,1,\ldots,\numPatches-1 \}$, are given by $\overline{\face^{(i)}} = \overline{\patch^{(i)}} 
\cap \overline{\patch^{(i+1)}} $, or more precisely by
\[
\map^{(i)}(0,t_1,t_2) = \map^{(i+1)}(t_1,0,t_2), 
\quad (t_1,t_2) \in [0,1]^2,
\]
considering the upper index~$i$ modulo~$\nu$, and that the inner edge~$\overline{\edge^{(0)}}$ is given by $\overline{\edge^{(0)}} = 
\cap_{i=0}^{\numPatches-1} \overline{\patch^{(i)}}$, or more precisely by
\[
 \map^{(0)}(0,0,t) = \cdots =  \map^{(\numPatches-1)}(0,0,t),
 \quad t \in [0,1],
\]
see Fig.~\ref{fig:MultiPatchVolume_InnerEdge} for the case of a trilinear three- and four-patch volume~$\overline{\domain}$.
This means that the geometry mappings~$\map^{(i)}$, $i \in \{0,1,\ldots,\numPatches-1\}$, are given in standard form with respect to the inner 
faces~$\face^{(i)}$, $i \in \{0,1,\ldots,\numPatches-1 \}$, and with respect to the inner edge~$\edge^{(0)}$.

\begin{figure}
\begin{center}
\begin{tabular}{cc}
\resizebox{0.43\textwidth}{!}{
 \begin{tikzpicture}
  \coordinate(A) at (0,0);  \coordinate(B) at (1.5,1.0); \coordinate(C) at (-0.4,2.2); \coordinate(D) at (-1.8,0.6);
  \coordinate(E) at (-1.2,-1.2); \coordinate(F) at (1.0,-1.6); \coordinate(G) at (2.8,-0.6); 
  
  \coordinate(H) at (0.2,3.2);  \coordinate(I) at (1.9,3.8); \coordinate(J) at (-0.6,4.8); \coordinate(K) at (-1.6,3.4);
  \coordinate(L) at (-1.2,1.8); \coordinate(M) at (1.0,1.4); \coordinate(N) at (3.0,2.0); 
  \fill[gray!10] (A) -- (F) -- (F) -- (M) -- (M) -- (H) -- (H) -- (A);  
  \fill[gray!10] (A) -- (B) -- (B) -- (I) -- (I) -- (H) -- (H) -- (A); 
  \fill[gray!10] (A) -- (D) -- (D) -- (K) -- (K) -- (H) -- (H) -- (A); 
  
  \draw[dashed] (A) -- (B); \draw[dashed] (A) -- (D);  \draw[dashed] (A) -- (F); 
  \draw[dashed] (B) -- (C); \draw[dashed] (C) -- (D); \draw (D) -- (E); \draw (E) -- (F); \draw (F) -- (G); \draw[dashed] (G) -- (B);
  
  \draw (H) -- (I); \draw (H) -- (K);  \draw (H) -- (M); 
  \draw (I) -- (J); \draw (J) -- (K); \draw (K) -- (L); \draw (L) -- (M); \draw (M) -- (N); \draw (N) -- (I);

  \draw[dashed] (A) -- (H); \draw[dashed] (B) -- (I); \draw[dashed] (C) -- (J); \draw (D) -- (K); \draw (E) -- (L); \draw (F) -- (M); 
  \draw (G) -- (N);

  \fill (A) circle (1pt); \fill (B) circle (1pt); \fill (C) circle (1pt); \fill (D) circle (1pt);
  \fill (E) circle (1pt); \fill (F) circle (1pt); \fill (G) circle (1pt); \fill (H) circle (1pt); 
  \fill (I) circle (1pt); \fill (J) circle (1pt); \fill (K) circle (1pt); \fill (L) circle (1pt);
  \fill (M) circle (1pt); \fill (N) circle (1pt); 

  \draw[->] (0.2,0) -- (0.9,0.45);
  \draw[->] (0.05,0.2) -- (0.75,0.65);
  \draw[->] (0.22,-0.15) -- (0.7,-0.9);
  \draw[->] (-0.03,-0.15) -- (0.45,-0.9);
  \draw[->] (-0.13,-0.1) -- (-0.9,0.18);
  \draw[->] (-0.13,0.16) -- (-0.9,0.44);
  \draw[->] (-0.1,0.25) -- (-0.05,1.4);

  \node at (0.86,0.15) {\scriptsize $\xi_2$};
  \node at (0.5,0.75) {\scriptsize $\xi_1$};
  \node at (-0.7,0.6) {\scriptsize $\xi_2$};
  \node at (-0.75,-0.05) {\scriptsize $\xi_1$};
  \node at (0.15,-0.8) {\scriptsize $\xi_2$};
  \node at (0.8,-0.6) {\scriptsize $\xi_1$};
  \node at (-0.25,1) {\scriptsize $\xi_3$};
  \node at (-0.7,-0.7) {\scriptsize $\patch^{(1)}$};
  \node at (1.9,-0.5) {\scriptsize $\patch^{(2)}$};
  \node at (0,4.1) {\scriptsize $\patch^{(0)}$};
  \node at (0.5,1.2) {\scriptsize $\face^{(1)}$};
  \node at (-0.75,2.4) {\scriptsize $\face^{(0)}$};
  \node at (0.4,2.0) {\scriptsize $\edge^{(0)}$};
  \node at (1.3,2.8) {\scriptsize $\face^{(2)}$};
  \node at (-0.25,-0.3) {\scriptsize $\vertex^{(0)}$};
  \node at (1.85,1.05) {\scriptsize $\vertex^{(1)}$};
  \node at (-0.1,2.35) {\scriptsize $\vertex^{(2)}$};
  \node at (-2.1,0.65) {\scriptsize $\vertex^{(3)}$};
  \node at (-1.3,-1.4) {\scriptsize $\vertex^{(4)}$};
  \node at (1.0,-1.8) {\scriptsize $\vertex^{(5)}$};
  \node at (2.8,-0.85) {\scriptsize $\vertex^{(6)}$};
  \node at (0.15,3.4) {\scriptsize $\vertex^{(7)}$};
  \node at (1.95,4.05) {\scriptsize $\vertex^{(8)}$};
  \node at (-0.4,5.0) {\scriptsize $\vertex^{(9)}$};
  \node at (-2.0,3.4) {\scriptsize $\vertex^{(10)}$};
  \node at (-1.3,1.5) {\scriptsize $\vertex^{(11)}$};
  \node at (1.3,1.7) {\scriptsize $\vertex^{(12)}$};
  \node at (3.3,2.3) {\scriptsize $\vertex^{(13)}$};
  \end{tikzpicture}
 }
 &
 \resizebox{0.41\textwidth}{!}{
 \begin{tikzpicture}
  \coordinate(A) at (0,0);  \coordinate(B) at (2.0,-0.1); \coordinate(C) at (1.7,1.3); \coordinate(D) at (0.5,1.4);
  \coordinate(E) at (-1.1,1.3); \coordinate(F) at (-1.7,-0.2); \coordinate(G) at (-1.3,-1.3); \coordinate(H) at (0.4,-1.7); 
  \coordinate(I) at (1.7,-1.5);

   \coordinate(J) at (0,2.2);  \coordinate(K) at (2.0,2.1); \coordinate(L) at (1.7,3.3); \coordinate(M) at (0.5,3.6);
  \coordinate(N) at (-1.2,3.3); \coordinate(O) at (-1.7,2.0); \coordinate(P) at (-1.4,0.9); \coordinate(Q) at (0.4,0.5); 
  \coordinate(R) at (1.7,0.7); 
  \fill[gray!10] (A) -- (B) -- (B) -- (K) -- (K) -- (J) -- (J) -- (A);  
  \fill[gray!10] (A) -- (D) -- (D) -- (M) -- (M) -- (J) -- (J) -- (A); 
  \fill[gray!10] (A) -- (F) -- (F) -- (O) -- (O) -- (J) -- (J) -- (A); 
  \fill[gray!10] (A) -- (H) -- (H) -- (Q) -- (Q) -- (J) -- (J) -- (A); 
  
  \draw[dashed] (A) -- (B); \draw[dashed] (A) -- (D);  \draw[dashed] (A) -- (F);  \draw[dashed] (A) -- (H);
  \draw[dashed] (B) -- (C); \draw[dashed] (C) -- (D); \draw[dashed] (D) -- (E); \draw[dashed] (E) -- (F); \draw (F) -- (G); \draw (G) -- (H); \draw (H) -- (I);
  \draw (I) -- (B);
  
  \draw (J) -- (K); \draw (J) -- (M);  \draw (J) -- (O);  \draw (J) -- (Q);
  \draw (K) -- (L); \draw (L) -- (M); \draw (M) -- (N); \draw (N) -- (O); \draw (O) -- (P); \draw (P) -- (Q); \draw (Q) -- (R);
  \draw (R) -- (K);
  
  \draw[dashed] (A) -- (J); \draw (B) -- (K); \draw[dashed] (C) -- (L); \draw[dashed] (D) -- (M); \draw[dashed] (E) -- (N);
  \draw (F) -- (O); \draw (G) -- (P); \draw (H) -- (Q); \draw (I) -- (R);

  \fill (A) circle (1pt); \fill (B) circle (1pt); \fill (C) circle (1pt); \fill (D) circle (1pt);
  \fill (E) circle (1pt); \fill (F) circle (1pt); \fill (G) circle (1pt); \fill (H) circle (1pt); 
  \fill (I) circle (1pt);  \fill (J) circle (1pt); \fill (K) circle (1pt); \fill (L) circle (1pt);
  \fill (M) circle (1pt); \fill (N) circle (1pt); \fill (O) circle (1pt); \fill (P) circle (1pt);
  \fill (Q) circle (1pt); \fill (R) circle (1pt);  

  \draw[->] (-0.1,0.25) -- (-0.11,0.9);
  \draw[->] (0.15,0.1) -- (0.9,0.05);
  \draw[->] (0.01,0.2) -- (0.17,0.7);
  \draw[->] (0.15,0.15) -- (0.33,0.65);
  \draw[->] (0.15,-0.1) -- (0.9,-0.15);
  \draw[->] (0.15,-0.15) -- (0.3,-0.7);
  \draw[->] (-0.1,-0.15) -- (0.05,-0.7);
  \draw[->] (-0.1,-0.1) -- (-0.8,-0.2);
  \draw[->] (-0.1,0.1) -- (-0.8,0.0);

  \node at (0.78,0.23) {\tiny $\xi_1$};
  \node at (0.4,0.33) {\tiny $\xi_2$};
  \node at (0.78,-0.3) {\tiny $\xi_2$};
  \node at (0.4,-0.4) {\tiny $\xi_1$};
  \node at (-0.14,-0.7) {\tiny $\xi_2$};
  \node at (-0.9,-0.3) {\tiny $\xi_1$};
  \node at (-0.65,0.18) {\tiny $\xi_2$};
  \node at (-0.28,0.8) {\tiny $\xi_3$};
  \node at (0.2,0.8) {\tiny $\xi_1$};
  \node at (-0.8,-0.8) {\scriptsize $\patch^{(2)}$};
  \node at (1.3,-0.5) {\scriptsize $\patch^{(3)}$};
  \node at (1.0,2.9) {\scriptsize $\patch^{(0)}$};
  \node at (-0.5,2.7) {\scriptsize $\patch^{(1)}$};
  \node at (-1.32,1.6) {\scriptsize $\face^{(1)}$};
  \node at (0.5,2.5) {\scriptsize $\face^{(0)}$};
  \node at (-0.3,1.1) {\scriptsize $\edge^{(0)}$};
  \node at (0.3,1.2) {\scriptsize $\face^{(2)}$};
  \node at (1.3,1.9) {\scriptsize $\face^{(3)}$};
  \node at (-0.35,-0.4) {\scriptsize $\vertex^{(0)}$};
  \node at (2.5,0) {\scriptsize $\vertex^{(1)}$};
  \node at (1.4,1.5) {\scriptsize $\vertex^{(2)}$};
  \node at (0.85,1.6) {\scriptsize $\vertex^{(3)}$};
  \node at (-0.7,1.5) {\scriptsize $\vertex^{(4)}$};
  \node at (-2.05,-0.2) {\scriptsize $\vertex^{(5)}$};
  \node at (-1.4,-1.55) {\scriptsize $\vertex^{(6)}$};
  \node at (0.3,-1.95) {\scriptsize $\vertex^{(7)}$};
  \node at (2.0,-1.65) {\scriptsize $\vertex^{(8)}$};
  \node at (-0.2,2.35) {\scriptsize $\vertex^{(9)}$};
  \node at (2.4,2.2) {\scriptsize $\vertex^{(10)}$};
  \node at (2.0,3.5) {\scriptsize $\vertex^{(11)}$};
  \node at (0.5,3.8) {\scriptsize $\vertex^{(12)}$};
  \node at (-1.3,3.5) {\scriptsize $\vertex^{(13)}$};
  \node at (-2.1,2.0) {\scriptsize $\vertex^{(14)}$};
  \node at (-0.9,0.5) {\scriptsize $\vertex^{(15)}$};
  \node at (0.75,0.75) {\scriptsize $\vertex^{(16)}$};
  \node at (1.35,0.45) {\scriptsize $\vertex^{(17)}$};
  \end{tikzpicture}
 }
 \end{tabular}
\end{center}
\caption{A trilinear three- and four-patch volume~$\overline{\domain}=\cup_{i=0}^{\numPatches-1} \overline{\patch^{(i)}} \in \classVolume$ 
(i.e. $\numPatches =3$ and $\numPatches =4$, respectively) with $\nu$ inner faces~$\face^{(i)}$, $i \in \{0,1,\ldots, \numPatches-1 \}$, and one 
inner edge~$\edge^{(0)}$. The associated geometry mappings~$\map^{(i)}$, $i \in \{0,1,\ldots,\numPatches-1 \}$, are assumed to be in standard form with 
respect to the inner faces~$\face^{(i)}$, $i \in \{0,1,\ldots, \numPatches-1\}$, and with respect to the inner edge~$\edge^{(0)}$.}
\label{fig:MultiPatchVolume_InnerEdge}
\end{figure}

Based on the general framework for the construction of the $C^1$ isogeometric spline space~$\V^1$ presented in Section~\ref{subsec:general_framework}, 
and following the ideas from Remark~\ref{rem:modified_construction} for a slightly modified design, the space~$\V^1$ can be generated for a 
trilinear multi-patch volume~$\overline{\domain}$ belonging to the particular subclass $\classVolume$ of multi-patch volumes as the direct sum
\begin{equation} \label{eq:direct_sum_example}
 \V^1= \left( \bigoplus_{i=0}^{\numPatches-1} \SpacePatchMod{i} \right) \oplus 
 \left( \bigoplus_{i=0}^{\numPatches-1} \SpaceFaceMod{i} \right) \oplus \SpaceEdgeAllMod,
\end{equation}
with the single subspaces~$\SpacePatchMod{i}$, $i \in \{0,1,\ldots,\numPatches-1 \}$, $\SpaceFaceMod{i}$, $i \in \{0,1,\ldots,\numPatches-1 \}$, and 
$\SpaceEdgeAllMod$ given as
\begin{equation} \label{eq:patch_space_ex}
 \SpacePatchMod{i} = \Span \left\{ \phiPatch{i}{j_1}{j_2}{j_3}  \;|\; j_1,j_2 \in \I \setminus \{0,1 \}, \, j_3 \in \I \right\},
\end{equation}
\begin{equation} \label{eq:face_space_ex}
 \SpaceFaceMod{i} = \Span \left\{  \phiFace{i}{j_1}{j_2}{j_3}  \;|\; j_1=0,1, \, j_2,j_3 \in \I_{j_1} \setminus 
 \{0,1,\ldots,2-j_1\} \right\},
 \end{equation}
and
\begin{equation*}
 \SpaceEdgeAllMod = \left\{ \Big( \sum_{i =0}^{\numPatches-1} \phiFaceAllResMod{i} \Big) - \phiOneEdgeAll{0}  
 \; | \; \coeffAllEdgeMod \in \R^{|\coeffAllEdgeMod|}, \; \TMatrixEdgeMod \coeffAllEdgeMod = \f{0} \right\},
\end{equation*}
respectively, where $\phiFaceAllResMod{i}$ is the function
\[
 \phiFaceAllResMod{i}(\f{x})= \sum_{j_1=0}^{1} \sum_{j_2=0}^{2-j_1} \sum_{j_3=0}^{2-j_1} \coeffFace{i}{j_1}{j_2}{j_3} 
 \phiFace{i}{j_1}{j_2}{j_3}(\f{x}),
\]
$\coeffAllEdgeMod$ is the vector of all coefficients of the functions~$\,\phiFaceAllResMod{i}$, $i \in \{0,1,\ldots, \numPatches -1 \}$, and 
$\phiOneEdgeAll{0}$, i.e. $\coeffFace{i}{j_1}{j_2}{j_3}$, $j_1=0,1$, $ j_2,j_3 \in  \{0,1,\ldots,2-j_1\}$, $i \in \{
0,1,\ldots,\numPatches\}$, and $\coeffPatchEdgeSt{0}{j_1}{j_2}{j_3}{m}$, $j_1,j_2=0,1$, $j_3 \in \I$, $m \in \{0,1,\ldots,\numPatches -1 \}$, and 
\begin{equation} \label{eq:homogoneous_system_Mod} 
 \TMatrixEdgeMod \coeffAllEdgeMod = \f{0},
\end{equation}
is the reduced homogeneous linear system~\eqref{eq:homogoneous_system} formed by the remaining equations~\eqref{eq:facesequalNew} and 
\eqref{eq:facesedgesequalNew}, which are given by the equations
\begin{equation*}
 \partial_{1}^{\ell_1} \partial_{2}^{\ell_2} \big(\phiFaceAll{\LeftFace{0}{m}} \circ \map^{(m)} \big)(0,0,\zeta_j) = 
 \partial_{1}^{\ell_1} \partial_{2}^{\ell_2} \big(\phiFaceAll{\RightFace{0}{m}} \circ \map^{(m)} \big)(0,0,\zeta_j), 
 \mbox{ } 0 \leq \ell_1,\ell_2, \leq 1, \, j \in \I,
\end{equation*}
for $m \in \{0,1,\ldots,\numPatches-1 \}$, and
\begin{equation*} 
  \partial_{1}^{\ell_1} \partial_{2}^{\ell_2} \big(\phiFaceAll{\LeftFace{0}{m}} \circ \map^{(m)} \big)(0,0,\zeta_j) = 
 \partial_{1}^{\ell_1} \partial_{2}^{\ell_2} \big(\phiOneEdgeAll{0} \circ \map^{(m)} \big)(0,0,\zeta_j), 
 \mbox{ } 0 \leq \ell_1,\ell_2, \leq 1, \, j \in \I,
\end{equation*}
for $m \in \{0,1,\ldots,\numPatches\}$. Note that the here presented construction~\eqref{eq:direct_sum_example} of the $C^1$ isogeometric spline 
space~$\V^1$ for the particular subclass $\classVolume$ of trilinear multi-patch volumes~$\overline{\domain}$ can be directly derived from the general 
framework~\eqref{eq:direct_sum_GeneralMultiPatch}. Now, instead of using the patch spaces~$\SpacePatch{i}$, $i \in \indexPatch$, we can 
trivially extend these spaces to the boundary faces and to the boundary edges with a patch valency~$1$ of the multi-patch volume~$\overline{\domain}$ to 
get the modified patch spaces~$\SpacePatchMod{i}$. Therefore, the face spaces~$\SpaceFace{i}$ for the boundary faces~$\face^{(i)}$, $i \in 
\indexFaceBoundary$, are not needed anymore, since each face space~$\SpaceFace{i}$, $i \in \indexFaceBoundary$, is contained in a patch 
space~$\SpacePatchMod{\ell}$, $\ell \in \indexPatch$. Furthermore, instead of using the face spaces~$\SpaceFace{i}$ for the inner faces~$\face^{(i)}$, 
$i \in \indexFaceInner$, these spaces can be trivially enlarged to the boundary edges with a patch valency~$2$ of the multi-patch 
volume~$\overline{\domain}$ to obtain the adapted face spaces~$\SpaceFaceMod{i}$ for $i \in \indexFaceInner$. As a result of these modifications, the 
edge space~$\SpaceEdgeAll$ reduces to the smaller and simplified edge space~$\SpaceEdgeAllMod$, where now just one edge, namely the inner 
edge~$\edge^{(0)}$, has to be considered.

Analogous to space~$\SpaceEdgeAll$ in Section~\ref{subsec:general_framework}, a basis of the space~$\SpaceEdgeAllMod$ is determined by 
a basis of the kernel of the matrix~$\TMatrixEdgeMod$ in the homogeneous linear system~\eqref{eq:homogoneous_system_Mod}, and can be constructed 
again e.g. by finding a minimal determining set for the coefficients~$\coeffAllEdgeMod$. Let such a basis of the space~$\SpaceEdgeAllMod$ be given 
by the functions~$\phiEdgeAllSingleMod{j}$, $j \in \{0,1,\ldots,n_{\TMatrixEdgeMod}-1 \}$, with $n_{\TMatrixEdgeMod} = \dim \ker (\TMatrixEdgeMod) = 
\dim \SpaceEdgeAllMod$, i.e.
\[
 \SpaceEdgeAllMod = \Span \left\{ \phiEdgeAllSingleMod{j} \;|\; j \in \{0,1,\ldots,n_{\TMatrixEdgeMod} -1 \} \right\},
\]
then the functions $\phiEdgeAllSingleMod{j}$, $j \in \{0,1,\ldots,n_{\TMatrixEdgeMod}-1 \}$, form together with the functions 
$\phiPatch{i}{j_1}{j_2}{j_3}$, $j_1,j_2 \in \I \setminus \{0,1 \}$, $j_3 \in \I$, $i \in \{0,1,\ldots,\numPatches -1 \}$, and 
$\phiFace{i}{j_1}{j_2}{j_3}$, $j_1=0,1$, $j_2,j_3 \in \I_{j_1} \setminus \{0,1,\ldots,2-j_1\}$, $i \in \{0,1,\ldots,\numPatches -1 \}$, a basis of 
the $C^1$ isogeometric spline space~$\V^1$. 

\subsection{Dimension of $\V^1$} \label{subsec:dimension}

Due to the possible decomposition of the $C^1$ isogeometric spline space~$\V^1$ into the direct sum~\eqref{eq:direct_sum_example}, the dimension of $\V^1$ 
can be obtained via
\[
  \dim \V^1 = \sum_{i =0}^{\numPatches-1} \dim \SpacePatchMod{i}  +  \sum_{i =0}^{\numPatches -1} \dim \SpaceFaceMod{i}  + \dim \SpaceEdgeAllMod .
\]
The dimensions of the spaces $\SpacePatchMod{i}$, $i \in \{0,1,\ldots, \nu-1 \}$, and of the spaces~$\SpaceFaceMod{i}$, $i \in \{0,1,\ldots, \nu-1 \}$, are 
equal to
\[
 \dim \SpacePatchMod{i} = |\I|(|\I|-2)^2, \quad i \in \{0,1,\ldots, \numPatches-1 \}, 
\]
and 
\[
 \dim \SpaceFaceMod{i} = (|\I_0|-6)^2 + (|\I_1|-4)^2 , \quad i \in \{0,1,\ldots, \numPatches-1 \}, 
\]
respectively, which directly follow from the constructions~\eqref{eq:patch_space_ex} or \eqref{eq:face_space_ex} for the single spaces. 
As already 
explained in Section~\ref{subsec:general_framework}, the dimension of~$\SpaceEdgeAllMod$ does not just depend on the degree~$p$, the regularity~$r$ and the 
number of spline elements, i.e. $k+1$, of the underlying spline space~$\US{p}{r}{h}$ like for the dimensions of the spaces $\SpacePatchMod{i}$ and 
$\SpaceFaceMod{i}$, rather also on the valency~$\nu$ of the inner edge and even on the shapes of the individual trilinear patches of the multi-patch 
domain, which will be also seen later on the basis of an example. We numerically compute the generic dimension of the space~$\SpaceEdgeAllMod$, that 
is the dimension of the space~$\SpaceEdgeAllMod$ which can be expected to get with probability $1$ for a trilinear multi-patch 
volume~$\overline{\domain} \in \classVolume$ with an inner edge of valency~$\nu$ and for a given spline degree $p$, regularity $r$ and $k+1$ 
spline elements.
For this numerical study, we identify for a large number of randomly generated trilinear multi-patch volumes 
$\overline{\domain} \in \classVolume$ with an inner edge valency~$\nu=3$, $\nu=4$ or $\nu=5$  the dimension of the space~$\SpaceEdgeAllMod$ for different 
values of~$p$, $r$ and $k$, see Table~\ref{tab:examples_inner_edge}, and we conjecture from these results that 
the generic dimension of the spline space~$\SpaceEdgeAllMod$ is given as
\begin{equation} \label{eq:generic_dimension}
\dim \SpaceEdgeAllMod = 3p+1 + \numPatches (p-1) + k \max \left(0,(\numPatches + 3) (p-r-3) + 3 \right) .
\end{equation}
Therefore, the space~$\SpaceEdgeAllMod$ is h-refineable in the generic case if $  p-r > \lfloor \frac{3 (\numPatches + 2)}{\numPatches +3}\rfloor = 2 $.
{\renewcommand{\arraystretch}{1.6}
\begin{table}[htb]
 \centering\scriptsize 
 \begin{tabular}{|c||c|c|c|c|c|c|c|c|c|c|c|c|c|} \hline
 \multicolumn{13}{|c|}{$\nu=3$} \\ \hline
 & $p=3$ & \multicolumn{2}{|c|}{$p=4$} & \multicolumn{3}{|c|}{$p=5$} & \multicolumn{3}{|c|}{$p=6$} & \multicolumn{3}{|c|}{$p=7$} \\ \hline  
 $k$ & $r=1$ & $r=1$ & $r=2$ & $r=1$ & $r=2$ & $r=3$ & $r=1$ & $r=2$ & $r=3$ & $r=1$ & $r=2$ & $r=3$ \\ \hline 
 0 & 16 & 22 & 22 & 28 & 28 & 28 & 34 & 34 & 34 & 40  & 40  & 40 \\ \hline
 1 & 16 & 25 & 22 & 37 & 31 & 28 & 49 & 43 & 37 & 61  & 55  & 49 \\ \hline
 2 & 16 & 28 & 22 & 46 & 34 & 28 & 64 & 52 & 40 & 82  & 70  & 58 \\ \hline
 3 & 16 & 31 & 22 & 55 & 37 & 28 & 79 & 61 & 43 & 103 & 85  & 67 \\ \hline
 4 & 16 & 34 & 22 & 64 & 40 & 28 & 94 & 70 & 46 & 124 & 100 & 76 \\ \hline \hline
  \multicolumn{13}{|c|}{$\nu=4$} \\ \hline
 & $p=3$ & \multicolumn{2}{|c|}{$p=4$} & \multicolumn{3}{|c|}{$p=5$} & \multicolumn{3}{|c|}{$p=6$} & \multicolumn{3}{|c|}{$p=7$} \\ \hline  
 $k$ & $r=1$ & $r=1$ & $r=2$ & $r=1$ & $r=2$ & $r=3$ & $r=1$ & $r=2$ & $r=3$ & $r=1$ & $r=2$ & $r=3$ \\ \hline 
 0 & 18 & 25 & 25 & 32 & 32 & 32 & 39  & 39 & 39 & 46  & 46  & 46 \\ \hline
 1 & 18 & 28 & 25 & 42 & 35 & 32 & 56  & 49 & 42 & 70  & 63  & 56 \\ \hline
 2 & 18 & 31 & 25 & 52 & 38 & 32 & 73  & 59 & 45 & 94  & 80  & 66 \\ \hline
 3 & 18 & 34 & 25 & 62 & 41 & 32 & 90  & 69 & 48 & 118 & 97  & 76 \\ \hline
 4 & 18 & 37 & 25 & 72 & 44 & 32 & 107 & 79 & 51 & 142 & 114 & 86 \\ \hline \hline
  \multicolumn{13}{|c|}{$\nu=5$} \\ \hline
 & $p=3$ & \multicolumn{2}{|c|}{$p=4$} & \multicolumn{3}{|c|}{$p=5$} & \multicolumn{3}{|c|}{$p=6$} & \multicolumn{3}{|c|}{$p=7$} \\ \hline  
 $k$ & $r=1$ & $r=1$ & $r=2$ & $r=1$ & $r=2$ & $r=3$ & $r=1$ & $r=2$ & $r=3$ & $r=1$ & $r=2$ & $r=3$ \\ \hline 
 0 & 20 & 28 & 28 & 36 & 36 & 36 & 44  & 44 & 44 & 52  & 52  & 52 \\ \hline
 1 & 20 & 31 & 28 & 47 & 39 & 36 & 63  & 55 & 47 & 79  & 71  & 63 \\ \hline
 2 & 20 & 34 & 28 & 58 & 42 & 36 & 82  & 66 & 50 & 106 & 90  & 74\\ \hline
 3 & 20 & 37 & 28 & 69 & 45 & 36 & 101  & 77 & 53 & 133 & 109 & 85\\ \hline
 4 & 20 & 40 & 28 & 80 & 48 & 36 & 120 & 88 & 56 & 160 & 128 & 96\\ \hline 
  \end{tabular}
  \caption{Numerically obtained generic dimension of the space $\SpaceEdgeAllMod$ for different inner edge valency~$\nu$, spline degree~$p$, regularity $r$ 
  and $k+1$ spline elements.}
  \label{tab:examples_inner_edge}
\end{table}
}

 For some multi-patch volumes~$\overline{\domain} \in \classVolume$, in particular with special configurations and shapes of the single 
trilinear patches, the dimension of the corresponding space~$\SpaceEdgeAllMod$ can differ from the generic dimension~\eqref{eq:generic_dimension}. 
One such example is a four-patch domain~$\overline{\domain} \in \classVolume$ as shown in Fig.~\ref{fig:MultiPatchVolume_InnerEdge}~(right) with vertices
$\vertex^{(i)}$, $i \in \{0, 1, \ldots,17 \}$, which are given as
\[
 \vertex^{(0)} = \left(\frac{13}{16},\frac{7}{20},-\frac{21}{80} \right)^T, \mbox{ }
 \vertex^{(1)} = \left(\frac{83}{20},\frac{1}{5},-\frac{1}{5} \right)^T, \mbox{ }
 \vertex^{(2)} = \left(\frac{9}{2},\frac{43}{10},-\frac{3}{20} \right)^T, \]
 \[\vertex^{(3)} = \left(\frac{3}{10},\frac{22}{5},-\frac{11}{40} \right)^T, \mbox{ }
 \vertex^{(4)} = \left(-\frac{39}{10},\frac{9}{2},-\frac{2}{5} \right)^T, \mbox{ }
 \vertex^{(5)} = \left(-\frac{151}{40},\frac{1}{2},-\frac{13}{40} \right)^T, \]
 \begin{equation} \label{eq:vertices_non-generic}
 \vertex^{(6)} = \left(-\frac{73}{20},-\frac{7}{2},-\frac{1}{4} \right)^T, \mbox{ }
 \vertex^{(7)} = \left(\frac{3}{40},-\frac{37}{10},-\frac{1}{4} \right)^T, \mbox{ }
 \vertex^{(8)} = \left(\frac{19}{5},-\frac{39}{10},-\frac{1}{4} \right)^T,
 \end{equation}
 \[\vertex^{(9)} = \left(-\frac{1}{15},-\frac{13}{30},\frac{197}{30} \right)^T, \mbox{ } 
 \vertex^{(10)} = \left(\frac{43}{10},\frac{3}{5},\frac{16}{3} \right)^T, \mbox{ }
 \vertex^{(11)} = \left(\frac{19}{5},\frac{121}{30},\frac{29}{5} \right)^T,
\]
\[
 \vertex^{(12)} = \left(-\frac{4}{15},\frac{18}{5},\frac{91}{15} \right)^T, \mbox{ }
 \vertex^{(13)} = \left(-\frac{107}{30},\frac{133}{30},\frac{173}{30} \right)^T, \mbox{ }
 \vertex^{(14)} = \left(-\frac{139}{30},\frac{2}{3},\frac{191}{30} \right)^T,
\]
\[
 \vertex^{(15)} = \left(-\frac{39}{10},-\frac{53}{15},\frac{91}{15} \right)^T, \mbox{ }
 \vertex^{(16)} = \left(\frac{2}{3},-\frac{121}{30},\frac{193}{30} \right)^T, \mbox{ }
 \vertex^{(17)} = \left(\frac{7}{2},-\frac{23}{6},\frac{193}{30} \right)^T.
\]
For this concrete example, the vertices~$\vertex^{(i)}$, $i \in \{0,1,\ldots,8 \}$, are lying on the planar surface
\[
\surf(\xi_1,\xi_2)= (1-\xi_1)(1-\xi_2) \vertex^{(6)} + \xi_1(1-\xi_2) \vertex^{(8)} + (1-\xi_1)\xi_2 \vertex^{(4)} + \xi_1 \xi_2 \vertex^{(2)}, 
\mbox{ } (\xi_1,\xi_2) \in [0,1 ]^2,
\]
and are determined via this surface by 
\[
 \vertex^{(0)} = \surf\left(\frac{1}{2},\frac{1}{2}\right), \mbox{ }  \vertex^{(1)} = \surf\left(1,\frac{1}{2}\right), \mbox{ }   
 \vertex^{(2)} = \surf\left(1,1\right), \mbox{ }   \vertex^{(3)} = \surf\left(\frac{1}{2},1\right), \mbox{ } 
 \vertex^{(4)} = \surf\left(0,1\right),
\]
\[
 \vertex^{(5)} = \surf\left(0,\frac{1}{2}\right), \mbox{ }  \vertex^{(6)} = \surf(0,0), \mbox{ }   
 \vertex^{(7)} = \surf\left(\frac{1}{2},0\right), \mbox{ }   \vertex^{(8)} = \surf(1,0).
\]
As in the numerical study for the generic case above, we compute the dimension of the space~$\SpaceEdgeAllMod$ for different values of~$p$, $r$ and $k$, 
see Table~\ref{tab:example_non-generic}. The results indicate that the dimension of the space~$\SpaceEdgeAllMod$ is equal to
\begin{equation*}
\dim \SpaceEdgeAllMod = 3p+2 + \numPatches (p-1) + k \max \left(0,(\numPatches + 3) (p-r-3) + 4 \right) ,
\end{equation*}
and is hence slightly larger than the generic one~\eqref{eq:generic_dimension}.

\begin{table}[htb]
 \centering\footnotesize  
 \begin{tabular}{|c||c|c|c|c|c|c|c|c|c|c|c|c|c|} \hline
 & $p=3$ & \multicolumn{2}{|c|}{$p=4$} & \multicolumn{3}{|c|}{$p=5$} & \multicolumn{3}{|c|}{$p=6$} & \multicolumn{3}{|c|}{$p=7$} \\ \hline  
 $k$ & $r=1$ & $r=1$ & $r=2$ & $r=1$ & $r=2$ & $r=3$ & $r=1$ & $r=2$ & $r=3$ & $r=1$ & $r=2$ & $r=3$ \\ \hline 
 0 & 19 & 26 & 26 & 33 & 33 & 33 & 40  & 40 & 40 & 47  & 47   & 47 \\ \hline
 1 & 19 & 30 & 26 & 44 & 37 & 33 & 58  & 51 & 44 & 72  & 65   & 58 \\ \hline
 2 & 19 & 34 & 26 & 55 & 41 & 33 & 76  & 62 & 48 & 97  & 83   & 69 \\ \hline
 3 & 19 & 38 & 26 & 66 & 45 & 33 & 94  & 73 & 52 & 122 & 101  & 80 \\ \hline
 4 & 19 & 42 & 26 & 77 & 49 & 33 & 112 & 84 & 56 & 147 & 119  & 91 \\ \hline
  \end{tabular}
  \caption{The dimension of the space $\SpaceEdgeAllMod$ for different spline degree~$p$, regularity $r$ and $k+1$ spline elements for a 
  non-generic example of a multi-patch volume~$\overline{\domain} \in \classVolume$ with an inner edge valency~$\numPatches=4$, where the 
  vertices~$\vertex^{(i)}$, $i \in \{0,1, \ldots, 17 \}$, are given in~\eqref{eq:vertices_non-generic}.}
  \label{tab:example_non-generic}
\end{table}

\subsection{$L^2$ approximation} \label{subsec:L2approximation}

The goal of this subsection is to numerically explore the approximation properties of the space~$\V^1$ over a trilinear multi-patch 
volume~$\overline{\domain} \in \classVolume$. For this purpose, we perform $L^2$~approximation on one concrete volume, namely on a trilinear three-patch 
volume~$\overline{\domain} \in \classVolume$ as shown in Fig.~\ref{fig:MultiPatchVolume_InnerEdge}~(left), where the single 
vertices~$\vertex^{(i)}$, $i \in \{0,1,\ldots,13 \}$, are given as
\[
 \vertex^{(0)} = \left( \frac{88}{15},\frac{9}{5},-\frac{2}{3}\right)^T, \mbox{ } \vertex^{(1)} = \left(\frac{541}{60},\frac{113}{35},-\frac{4}{15}\right)^T, 
 \mbox{ } \vertex^{(2)} = \left(\frac{94}{15},\frac{187}{30},\frac{4}{15}\right)^T,
 \]
 \[\vertex^{(3)} = \left(\frac{13}{6},\frac{93}{20},-\frac{2}{15}\right)^T, \mbox{ }
\vertex^{(4)} = \left(-\frac{3}{5},-\frac{2}{5},\frac{2}{5}\right)^T, 
 \mbox{ } \vertex^{(5)} = \left(\frac{71}{15},-\frac{29}{30},0\right)^T,
 \]
 \[\vertex^{(6)} = \left(\frac{97}{10},-\frac{2}{5},-\frac{4}{15}\right)^T, \mbox{ } \vertex^{(7)} = \left(\frac{157}{30},\frac{17}{10},\frac{13}{2}\right)^T, \mbox{ }\vertex^{(8)} = \left(\frac{113}{12},\frac{629}{210},\frac{35}{6}\right)^T,
 \]
 \[\vertex^{(9)} = \left(\frac{91}{15},\frac{101}{15},\frac{179}{30}\right)^T, \mbox{ } \vertex^{(10)} = \left(\frac{67}{30},\frac{93}{20},\frac{98}{15}\right)^T, \mbox{ }\vertex^{(11)} = \left(\frac{1}{6},\frac{17}{30},\frac{31}{5}\right)^T,
 \]
 \[\vertex^{(12)} = \left(\frac{83}{15},-\frac{6}{5},\frac{92}{15}\right)^T, \mbox{ } \vertex^{(13)} = \left(\frac{101}{10},0,\frac{59}{10}\right)^T.
\]
We generate for the spline degrees~$p=3,4,5,6$ nested $C^1$ isogometric spline spaces~$\V^1$ with regularity $r=1$ for mesh sizes 
$h=\frac{1}{2^L}$, $L=0,1,2,3,4$ for $p \in \{3,4 \}$ and $L=0,1,2,3$ for $p \in \{5,6 \}$, where $L$ is the level of refinement. The dimensions of the resulting spaces~$\V^1$ and of the corresponding subspaces $\oplus_{i=0}^2 \SpacePatchMod{i}$, $\oplus_{i=0}^2 \SpaceFaceMod{i}$, $\SpaceEdgeAllMod$ are given in Table~\ref{tab:example_L2approximation}. Let $\{ \phi_i \}_{i=0}^{\dim \V^1 -1}$ be the constructed basis of such a 
$C^1$ space~$\V^1$, then we aim at approximating the function
\begin{equation} \label{eq:approx_func}
 z(x_1,x_2,x_3) =  5 \cos \left(\frac{x_1}{2}\right) \sin \left(\frac{x_2}{2}\right) \cos \left(\frac{x_3}{2}\right)
\end{equation}
over the considered trilinear three-patch volume in a least-squares sense, that means, we compute an approximation
\[
 z_h(\f{x}) = \sum_{i=0}^{\dim \V^1 -1} c_i \, \phi_i (\f{x}) , \quad c_i \in \R,
\]
of the function~$z$, which minimizes the objective function
\[
 \int_{\Omega} (z_h(\f{x})-z(\f{x}))^2 \mathrm{d}\f{x}.
\]
Fig.~\ref{fig:examples} shows the resulting relative $L^2$ errors measured on the entire volume~$\overline{\domain}$ (left), on the union of the 
inner faces~$\cup_{i=0}^2 \face^{(i)}$ (middle) and on the inner edge~$\edge^{(0)}$ (right). While for the spline degrees~$p=3,4$ the convergence rates are quite low, which is amongst others a consequence of the constant (for $p=3$) or very slowly increasing (for $p=4$) dimension of the edge space~$\SpaceEdgeAllMod$, the convergence rates for the spline degrees~$p=5,6$ are high and illustrate good approximation properties of the corresponding $C^1$ spaces~$\V^1$.

\begin{table}[htb]
 \centering\footnotesize
 \begin{tabular}{|c||c|c|c|c|c|c|c|c|} \hline
 & \multicolumn{4}{|c|}{$p=3$} & \multicolumn{4}{|c|}{$p=4$} \\ \hline
 $L$ & $\dim \V^1 $ & $\sum_{i} \dim \SpacePatchMod{i}$ & $\sum_{i} \dim \SpaceFaceMod{i}$ & $\dim \SpaceEdgeAllMod$ & $\dim \V^1 $ & $\sum_{i} \dim \SpacePatchMod{i}$ & $\sum_{i} \dim \SpaceFaceMod{i}$ & $\dim \SpaceEdgeAllMod$ \\ \hline 
 0 & 76 & 48 & 12 & 16 & 196 & 135 & 39 & 22 \\ \hline
 1 & 334 & 288 & 30 & 16 & 997 & 864 & 108 & 25 \\ \hline
 2 & 2020 & 1920 & 84 & 16 & 6415 & 6048 & 336 & 31\\ \hline
 3 & 14104 & 13824 & 264 & 16 & 46123 & 44928 & 1152 & 43\\ \hline
 4 & 105376 & 104448 & 912 & 16 & 349891 & 345600 & 4224 & 67 \\ \hline \hline
 & \multicolumn{4}{|c|}{$p=5$} & \multicolumn{4}{|c|}{$p=6$} \\ \hline
 $L$ & $\dim \V^1 $ & $\sum_{i} \dim \SpacePatchMod{i}$ & $\sum_{i} \dim \SpaceFaceMod{i}$ & $\dim \SpaceEdgeAllMod$ & $\dim \V^1 $ & $\sum_{i} \dim \SpacePatchMod{i}$ & $\sum_{i} \dim \SpaceFaceMod{i}$ & $\dim \SpaceEdgeAllMod$ \\ \hline 
 0 & 394 & 288 &  78 & 28 & 688 & 525 & 129 & 34 \\ \hline
 1 & 2191 & 1920 & 234 & 37 & 4057 & 3600 & 408 & 49 \\ \hline
 2 & 14659 & 13824 & 780 & 55 & 27895 & 26400 & 1416 & 79 \\ \hline
 3 & 107347 & 104448 & 2808 & 91 & 206971 & 201600 & 5232 & 139\\ \hline
   \end{tabular}
  \caption{The dimension of the spaces~$\V^1$ and of the corresponding subspaces $\oplus_{i=0}^2 \SpacePatchMod{i}$, $\oplus_{i=0}^2 \SpaceFaceMod{i}$, $\SpaceEdgeAllMod$ for different degrees~$p$ and level of refinements~$L$ used in the $L^2$ approximation example in Section~\ref{subsec:L2approximation}.}
  \label{tab:example_L2approximation}
\end{table}
 
\begin{figure}[htb]
\begin{center}
\begin{tabular}{ccc}
\multicolumn{3}{c}{\scriptsize $p=3$, $r=1$ \& $p=4$, $r=1$ } \\
\includegraphics[width=4.9cm,clip]{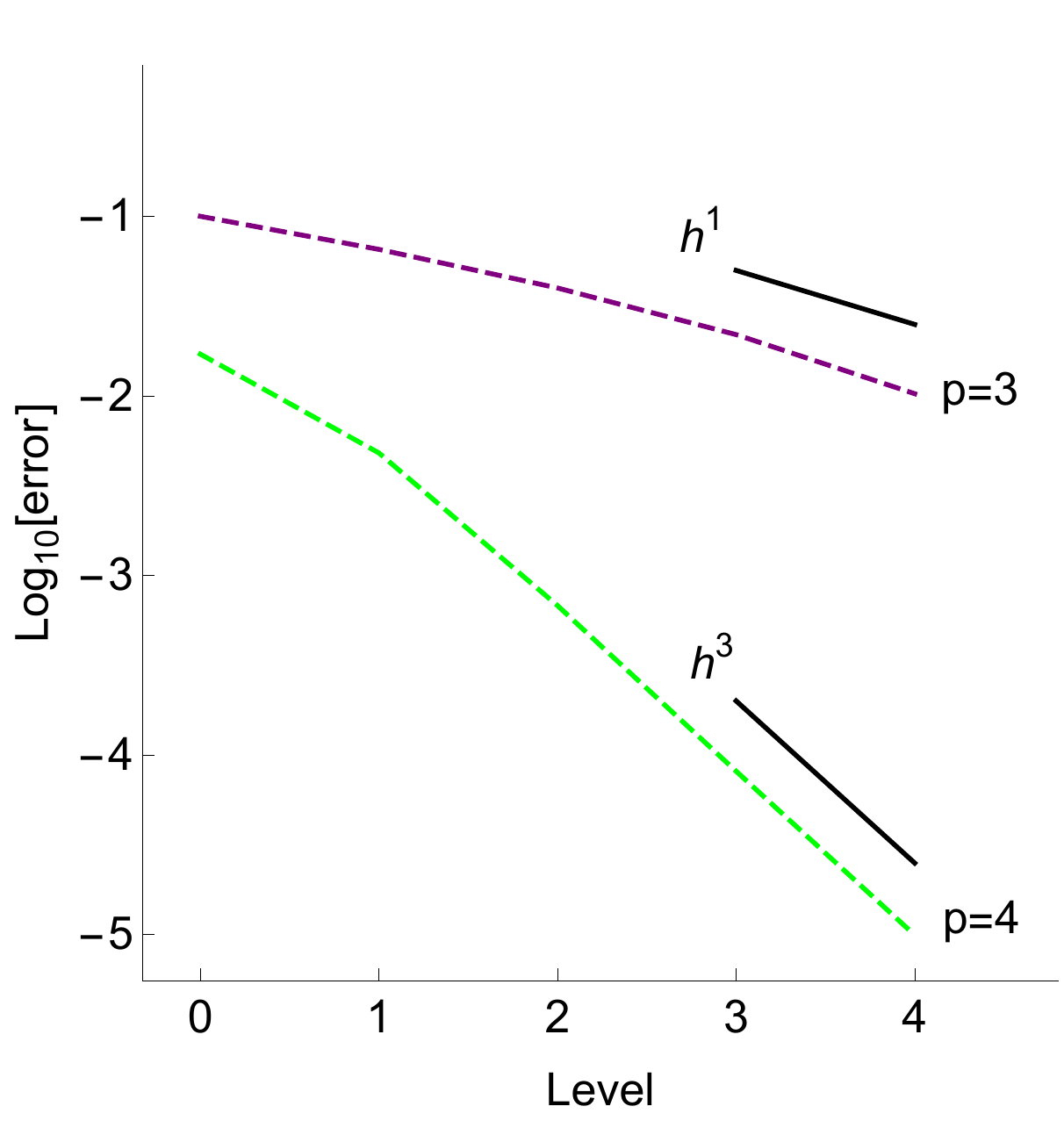} & 
\includegraphics[width=4.9cm,clip]{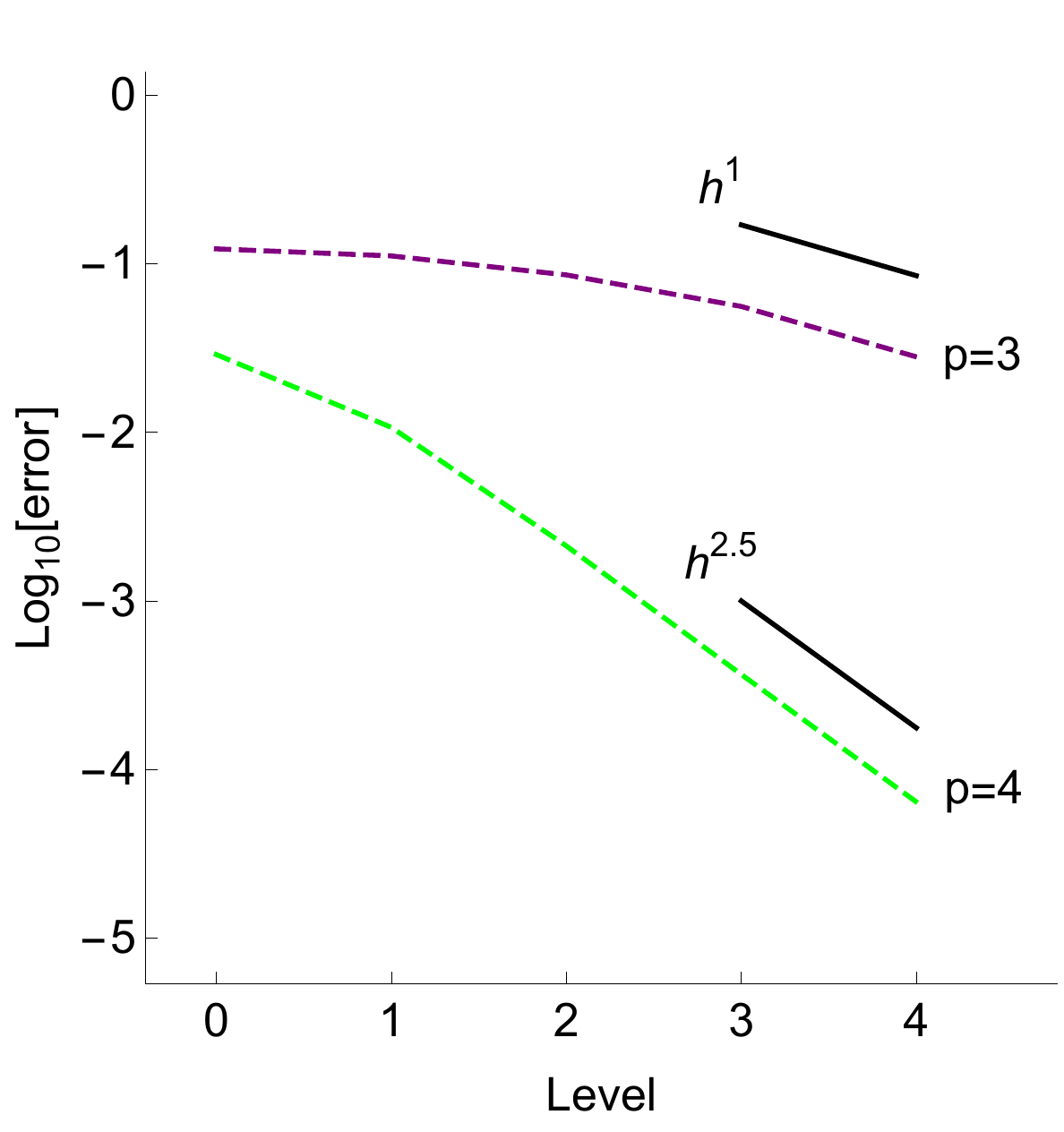} &
\includegraphics[width=4.9cm,clip]{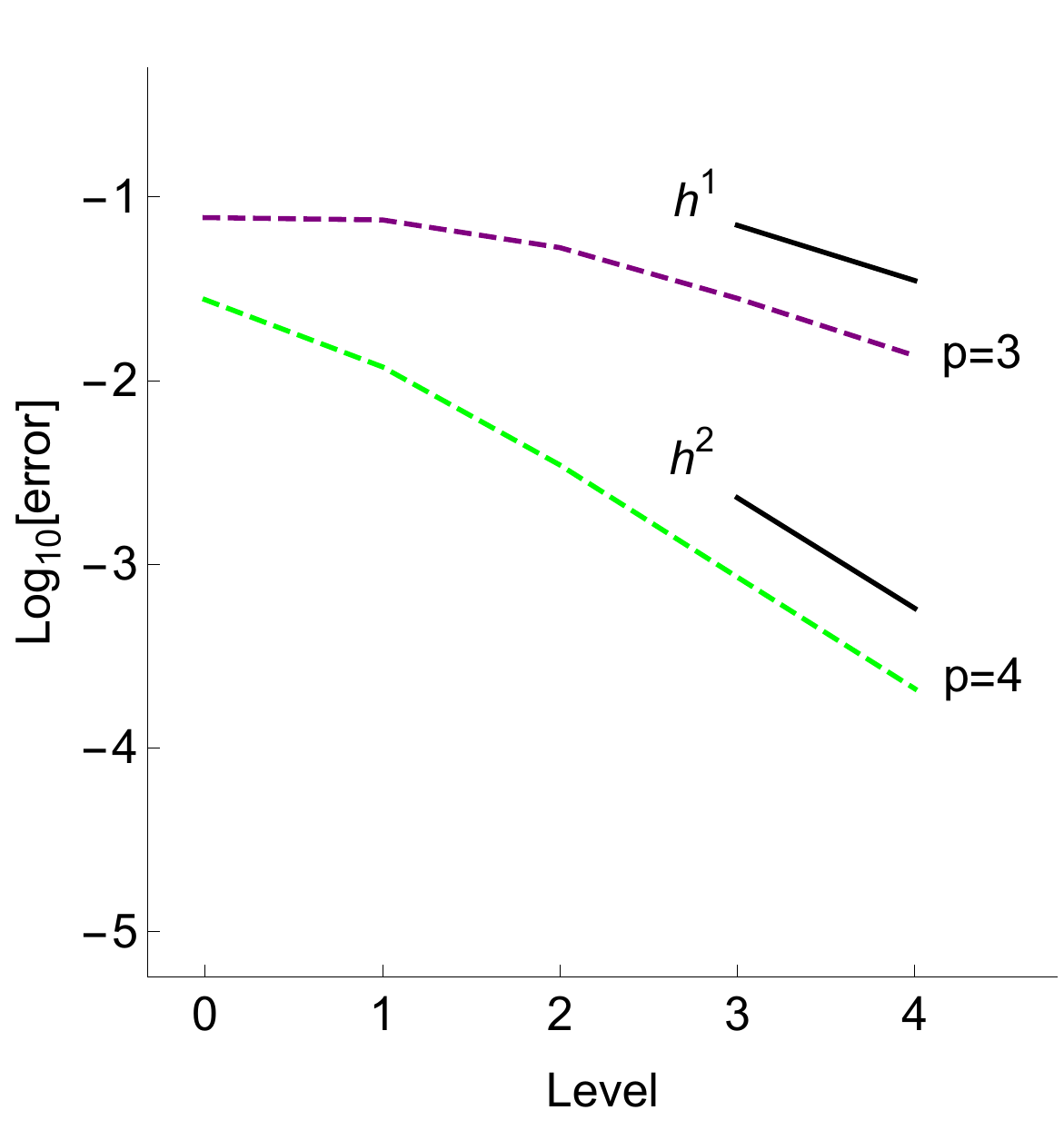} \\
\multicolumn{3}{c}{\scriptsize $p=5$, $r=1$ \& $p=6$, $r=1$ } \\
\includegraphics[width=4.9cm,clip]{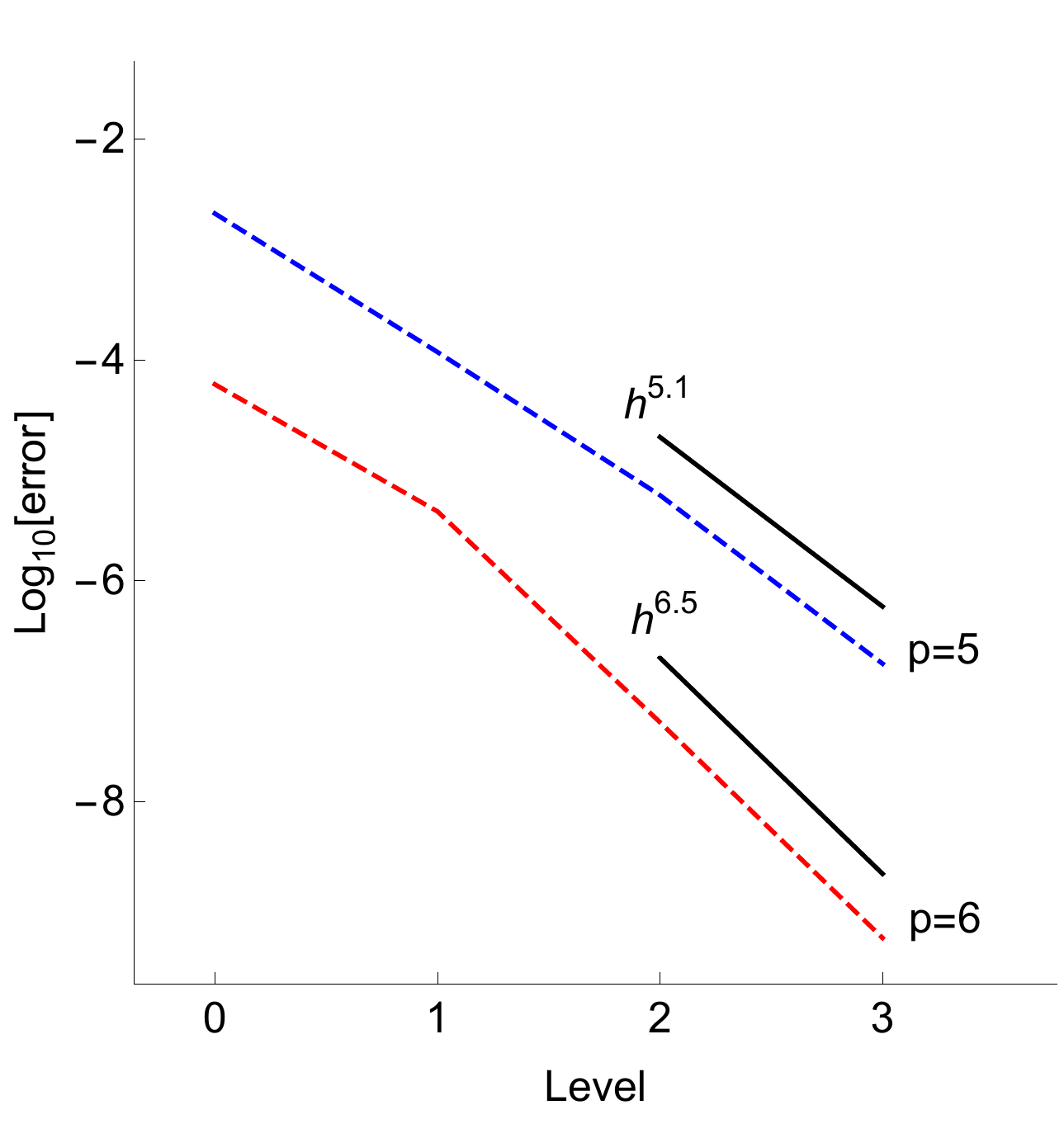} & 
\includegraphics[width=4.9cm,clip]{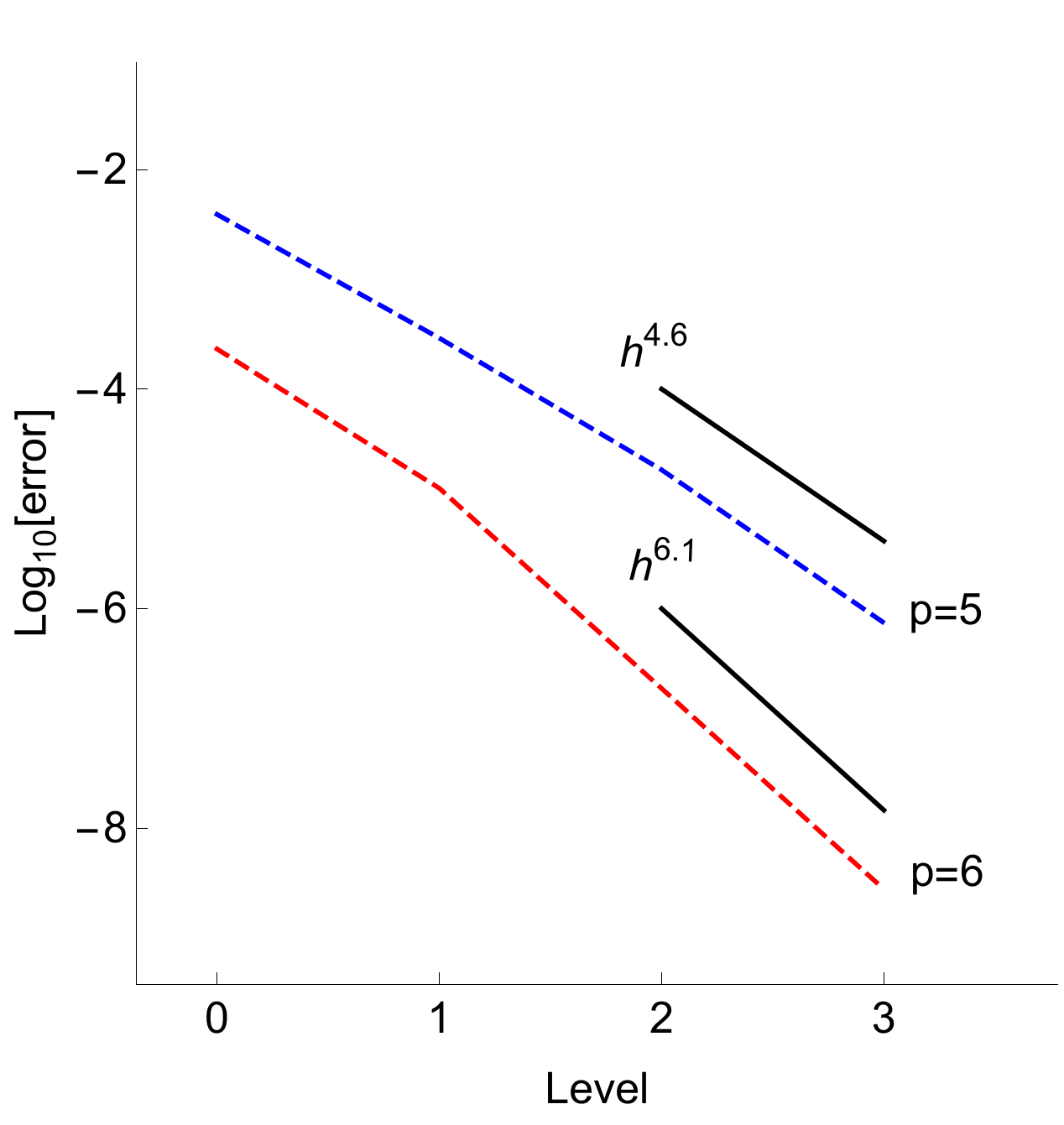} &
\includegraphics[width=4.9cm,clip]{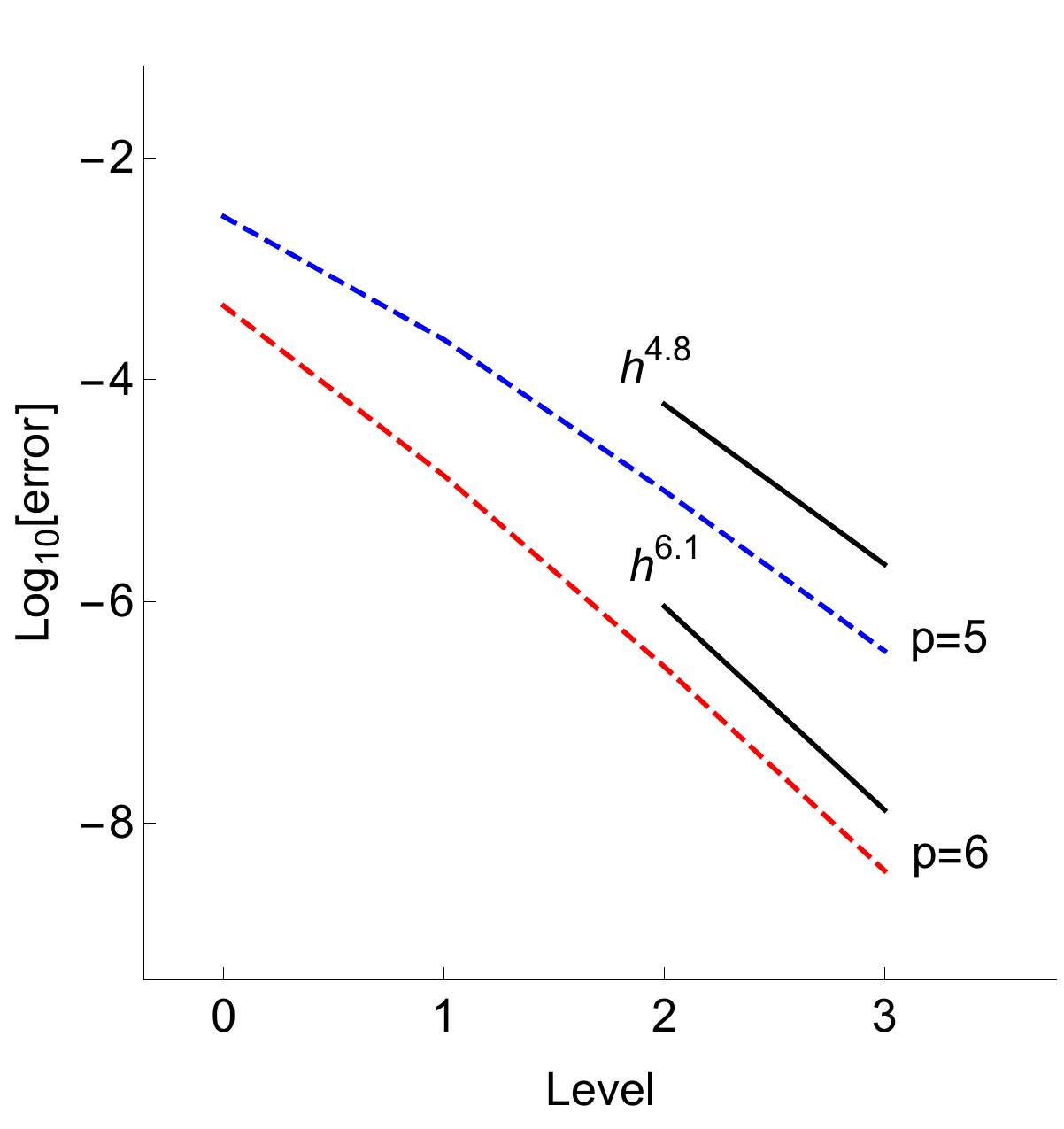} \\
{\scriptsize Rel. $L^2$-errors on entire volume~$\overline{\domain}$ } &
{\scriptsize Rel. $L^2$-errors on inner faces $\cup_{i=0}^2 \face^{(i)}$ } &
{\scriptsize Rel. $L^2$-errors on inner edge~$\edge^{(0)}$} 
\end{tabular}
\caption{Resulting relative $L^2$-errors by performing $L^2$ approximation for the function~\eqref{eq:approx_func} on the considered three-patch volume~$\overline{\domain}$ in Section~\ref{subsec:L2approximation}.}
\label{fig:examples}
\end{center}
\end{figure}

\section{Conclusion} \label{sec:conclusion}

We explored the concept of the $C^1$ isogeometric spline space~$\V^1$ over trilinearly parameterized multi-patch volumes~$\overline{\domain}$. 

Thereby, the main purpose of the paper was the design of a technique, which allows for a given trilinear multi-patch volume~$\overline{\domain}$ a simple and 
uniform construction of the $C^1$ isogeometric spline space~$\V^1$ and of an associated basis. The proposed procedure is based on the two-patch 
construction~\cite{BiKa19} and can be applied to any spline degree~$p \geq 3$. For the subclass~$\classVolume$ of trilinearly parameterized multi-patch volumes 
with one inner edge, we described the basis construction in more detail, and numerically studied some properties of the resulting $C^1$ isogeometric  
spline space~$\V^1$. We computed on the one hand its dimension, and investigated on the other hand its approximation properties by means of 
$L^2$~approximation.
Our presented construction leads to $C^1$ isogeometric basis functions which are given as the linear combination of explicitly given and locally supported 
functions. The scalar factors for such a linear combination are computed by solving a homogeneous system of linear equations. 
Using e.g. the minimal determining set algorithm described in~\cite{KaVi17b}, the resulting $C^1$ functions are locally supported with respect to the entire 
multi-patch volume, but can possess in the worst case a support over one edge or over the edges containing one vertex. 

A first interesting topic for future research is now the design of 
fully locally supported basis functions, e.g. by enforcing additional smoothness conditions across the edges and vertices similar to the bivariate case 
in~\cite{KaSaTa19a, KaSaTa19b}, where the functions are additionally enforced to be $C^2$ at the vertices.
The paper leaves several further open issues which are worth to study. 
One is the theoretical investigation of the 
numerically obtained results about the properties of the $C^1$ isogeometric spline space such as its dimension and its approximation properties. Further topics of interest are e.g. the generalization of our 
approach to an even wider class of multi-patch volumes than the trilinear ones and the study of possible applications of the constructed $C^1$ isogeometric 
spline functions such as the biharmonic equation, the Cahn-Hilliard equation or problems of strain gradient elasticity.
 
\section*{Acknowledgements} 
M. Kapl has been partially supported by the Austrian Science Fund (FWF) through the project P~33023-N.
V.~Vitrih has been partially supported by the Slovenian Research Agency (research program P1-0404 and research projects J1-9186, J1-1715).
This support is gratefully acknowledged.

\end{document}